\begin{document}
\newtheorem{lemma}{Lemma}[section]
\newtheorem{prop}[lemma]{Proposition}
\newtheorem{cor}[lemma]{Corollary}
\newtheorem{thm}[lemma]{Theorem}
\newtheorem{mthm}[lemma]{Meta-Theorem}
\newtheorem{con}[lemma]{Conjecture}
\newtheorem{problem}{Problem}

\theoremstyle{definition}
\newtheorem{rem}[lemma]{Remark}
\newtheorem{rems}[lemma]{Remarks}
\newtheorem{defi}[lemma]{Definition}
\newtheorem{ex}[lemma]{Example}

\newcommand{\N}{\mathbb{N}}
\newcommand{\Q}{\mathbb{Q}}
\newcommand{\Z}{\mathbb{Z}}
\newcommand{\F}{\mathbb{F}}
\newcommand{\SL}{\mathrm{SL}}
\newcommand{\PSL}{\mathrm{PSL}}
\newcommand{\GL}{\mathrm{GL}}
\newcommand{\EL}{\mathrm{EL}}
\newcommand{\St}{\mathrm{St}}
\newcommand{\Mat}{\mathrm{Mat}}
\newcommand{\Sym}{\mathrm{Sym}}

\pagestyle{plain}
\title[New examples]{New examples of finitely presented groups with strong fixed point properties}
\author{Indira Chatterji and Martin Kassabov}
\thanks{Partially supported by NSF grants DMS No. 0644613 and DMS No.~0600244.
The second author is also partially supported by
AMS Centennial Fellowship}

\maketitle

\section*{Introduction}
{
\renewcommand{\thelemma}{\arabic{lemma}}
In the seventies, the seminal work of Margulis~\cite{Ma} on super-rigidity showed
the lack of linear actions of $\SL_n(\Z)$ on other spaces than the obvious ones.
In the eighties work of Zimmer~\cite{Zi}, Witte-Morris~\cite{WM} and Weinberger \cite{W} established the lack of $\SL_n(\Z)$-actions on some compact manifolds and very recent work of Farb~\cite{Farb} and
Bridson-Vogtmann~\cite{BV} shows the lack of actions of $\SL_n(\Z)$ on many contractible spaces of dimension less than $n$. Hence, the following is almost a folklore statement.
\begin{mthm}
The group $\SL_\infty(\Z)$ cannot act non-trivially on any ``reasonable'' space.
\end{mthm}
The aim of this note is to use this Meta-Theorem to give a natural and explicit example of a
finitely presented group that cannot act on any reasonable space
(the only problem being that it is not clear exactly what a ``reasonable'' space is).
More precisely we prove the following:%
\footnote{It has been pointed out to us by D. Osin that it is possible to construct groups with similar properties using
relatively hyperbolic groups and the Higman's embedding theorem, but the resulting groups
are far from explicit.}
\begin{thm}
\label{main}
There exists
an explicit finite presentation of a group $\Gamma$ containing $\SL_\infty(\Z)$ and normally generated by it.
\end{thm}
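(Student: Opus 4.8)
The plan is to realise $\Gamma$ as a group of elementary $3\times3$ matrices over an explicit finitely presented ring. Recall that for any unital ring $R$ and any $n\ge 3$ one has $\EL_n\bigl(\Mat_k(R)\bigr)=\EL_{nk}(R)$: a block‑elementary matrix is a commuting product of ordinary transvections, and a transvection supported inside a single block is recovered as a commutator routed through a third block --- this last step being where $n\ge 3$ is used. Taking $R=\Z$ gives $\EL_3\bigl(\Mat_k(\Z)\bigr)=\EL_{3k}(\Z)=\SL_{3k}(\Z)$, and passing to the colimit along the corner inclusions $\Mat_k(\Z)\hookrightarrow\Mat_{k+1}(\Z)$ --- which induce, up to conjugation by permutation matrices, the standard corner maps $\SL_{3k}(\Z)\hookrightarrow\SL_{3k+3}(\Z)$ --- identifies
\[
\SL_\infty(\Z)=\varinjlim_k\SL_{3k}(\Z)=\EL_3\bigl(\Mat_\infty(\Z)\bigr),\qquad\text{where }\ \Mat_\infty(\Z):=\varinjlim_k\Mat_k(\Z).
\]
Thus it suffices to produce a finitely presented ring $S$ and a ring embedding $\Mat_\infty(\Z)\hookrightarrow S$ which is \emph{full}, in the sense that $\Mat_\infty(\Z)$ generates $S$ as a two‑sided ideal; one then puts $\Gamma:=\EL_3(S)$.

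For the ring, take $S=L_\Z(1,2)=\Z\langle x_1,x_2,y_1,y_2\mid y_ix_j=\delta_{ij}\cdot 1,\ x_1y_1+x_2y_2=1\rangle$, the Leavitt algebra of rank two over $\Z$ --- the algebraic analogue of the Cuntz $C^*$‑algebra $\mathcal{O}_2$ --- which is by construction a finitely presented $\Z$‑algebra. The products $e_{ij}:=x_iy_j$ $(i,j\in\{1,2\})$ form a system of $2\times2$ matrix units with $e_{11}+e_{22}=1$, so iterating along finite words in $\{1,2\}$ produces unital copies of $\Mat_{2^m}(\Z)$ inside $S$ for every $m$, and fitting these together along an infinite path yields an embedding $\Mat_\infty(\Z)\hookrightarrow S$. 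It is injective because $\Z\hookrightarrow S$ is torsion‑free, so no ideal $d\cdot\Mat_k(\Z)$ is killed; and it is full because the idempotent $x_1y_1\in\Mat_\infty(\Z)$ satisfies $y_1(x_1y_1)x_1=1$, so the two‑sided ideal it generates is all of $S$. Hence $\SL_\infty(\Z)=\EL_3\bigl(\Mat_\infty(\Z)\bigr)\hookrightarrow\EL_3(S)=\Gamma$.

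For normal generation, set $N:=\langle\langle\,\EL_3(\Mat_\infty(\Z))\,\rangle\rangle_\Gamma$ and $I:=\{a\in S:\ e_{ij}(a)\in N\text{ for all }i\ne j\}$. Using $e_{12}(a)e_{12}(a')=e_{12}(a+a')$ together with the Steinberg relations $e_{ik}(a)=[e_{ij}(a),e_{jk}(1)]$, $e_{12}(sa)=[e_{13}(s),e_{32}(a)]$ and $e_{12}(as)=[e_{13}(a),e_{32}(s)]$ --- whose right‑hand sides expand into products of $\Gamma$‑conjugates of the elementary matrices appearing on the left --- one checks that $I$ is a two‑sided ideal of $S$. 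Since $\EL_3(\Mat_\infty(\Z))\subseteq N$ forces $\Mat_\infty(\Z)\subseteq I$, fullness of the embedding gives $I=S$, hence $\EL_3(S)\subseteq N$ and $\Gamma=N$; that is, $\Gamma$ is normally generated by $\SL_\infty(\Z)$. (As in the first step, working with $3\times3$ rather than $2\times2$ matrices is essential here.)

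The remaining point --- and the technical heart of the argument --- is that $\Gamma=\EL_3\bigl(L_\Z(1,2)\bigr)$ is finitely presented \emph{as a group}. Finite generation is routine, since $L_\Z(1,2)$ is generated as an algebra by $x_1,x_2,y_1,y_2$ and the relations $e_{ij}(r+r')=e_{ij}(r)e_{ij}(r')$, $e_{ij}(rr')=[e_{ik}(r),e_{kj}(r')]$ reduce $\Gamma$ to the finitely many generators $e_{ij}(x_l),e_{ij}(y_l)$. For a presentation one must cut the a priori infinite family of Steinberg relations $[e_{ij}(a),e_{jk}(b)]=e_{ik}(ab)$, $[e_{ij}(a),e_{kl}(b)]=1$ $(a,b\in L_\Z(1,2))$, together with the relations imposed by the five defining relations of the algebra, down to a finite subfamily. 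The crucial input is the self‑similarity $L_\Z(1,2)\cong\Mat_2\bigl(L_\Z(1,2)\bigr)$ --- reflecting $[1]=[1]+[1]$ in $K_0$ --- which provides group isomorphisms
\[
\Gamma=\EL_3\bigl(L_\Z(1,2)\bigr)\cong\EL_6\bigl(L_\Z(1,2)\bigr)\cong\cdots\cong\EL_{3\cdot 2^m}\bigl(L_\Z(1,2)\bigr)\qquad(m\ge 0).
\]
A relation involving a long product of ring generators can, after passing to a larger matrix size, be re‑expressed in terms of short products, where the rewriting closes up; combining this with the fact that for $n$ sufficiently large the surjection $\St_n(R)\to\EL_n(R)$ has central kernel, and with stability of the Steinberg presentation in large rank, one extracts a finite presentation of $\EL_n\bigl(L_\Z(1,2)\bigr)\cong\Gamma$. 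Making this reduction effective --- so as to write down the explicit finite relator set promised in the statement --- is, I expect, the main obstacle; everything preceding it is formal.
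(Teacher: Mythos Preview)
Your route is genuinely different from the paper's, and in one respect cleaner. The paper builds its ring $R_{G,S}$ from an almost two--transitive action of a Thompson group; there $\Mat^0_S(\Z)$ sits as a \emph{proper} two--sided ideal (the quotient is $\Z[G]$), so the subgroup $\Delta_0\cong\SL_\infty(\Z)$ lies in the kernel of the induced surjection $\Gamma_0\twoheadrightarrow\SL_4(\Z)$ and cannot normally generate $\Gamma_0$. The paper repairs this with an amalgamated product of two copies of $\Gamma_0$, twisted so that the normal generator $g=e_{12}(1)$ of one copy is identified with an element $h\in\Delta_0$ of the other. Your Leavitt--algebra idea sidesteps this completely: since $y_1(x_1y_1)x_1=1$, the idempotent $x_1y_1$ is \emph{full}, so the copy of $\Mat_\infty(\Z)$ you build generates $L_\Z(1,2)$ as a two--sided ideal, and your ideal argument for $I$ then gives normal generation directly. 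That is a real simplification over the paper's amalgamation trick.

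The gap is exactly where you place it, but your sketch does not close it. To pass from $\St_n(L_\Z(1,2))$ to $\EL_n(L_\Z(1,2))$ you must kill the central subgroup $K_{2,n}(L_\Z(1,2))$, and for $\EL_n$ to inherit finite presentability you need this kernel to be finitely generated. The self--similarity $L\cong\Mat_2(L)$ yields isomorphisms $K_{2,3}(L)\cong K_{2,6}(L)\cong K_{2,12}(L)\cong\cdots$, but these Morita isomorphisms are \emph{not} the stabilisation maps $K_{2,n}(L)\to K_{2,n+1}(L)$, so even granting that the stable $K_2(L)$ vanishes you cannot conclude that any unstable $K_{2,n}(L)$ is zero or finitely generated. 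The phrases ``the rewriting closes up'' and ``one extracts a finite presentation'' are hiding a genuine unstable $K_2$ computation that you have not carried out.

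There is a cheap repair that keeps your ring and your normal--generation argument while borrowing only the paper's device for finite presentation: set
\[
\Gamma\ :=\ \St_4\bigl(L_\Z(1,2)\bigr)\big/\bigl\langle\!\bigl\langle\,\tilde r(E)\,\bigr\rangle\!\bigr\rangle,
\qquad \tilde r(E)=\bigl(e_{12}(-E)e_{21}(E)e_{12}(-E)\bigr)^4,
\]
with $E$ any rank--one idempotent in your embedded $\Mat_\infty(\Z)$. Then $\St_4(L_\Z(1,2))$ is finitely presented by the Krsti\'c--McCool theorem the paper cites (this is where $n\ge 4$ is needed, so $\EL_3$ should become $\St_4$ in any case), hence so is $\Gamma$; the subgroup generated by the $e_{ij}(a)$ with $a\in\Mat_\infty(\Z)$ is $\SL_\infty(\Z)$ by exactly the paper's argument for $\Delta_0$; and your Steinberg--relation proof that $I$ is an ideal runs verbatim in $\St_4$, giving $1\in I$ and hence $e_{12}(1)\in N$, which normally generates $\Gamma$. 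With this single modification your construction is complete and avoids the amalgamated--product step entirely.
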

Since $\SL_{\infty}(\Z)$ cannot act on various classes of spaces we can deduce the following:
\begin{mthm}
The group $\Gamma$ is a finitely presented group which cannot act non trivially on
any ``reasonable'' space.
\end{mthm}
Notice that the lack of action of $\SL_{\infty}(\Z)$
on a various class of spaces makes full use of the torsion.
Recently, Fisher and Silberman in~\cite{FS} give a criteria
for a property (T) group to not admit any non-trivial
volume preserving smooth action on a compact manifold,
and their work apply to groups with little or no torsion.%
\footnote{It seems possible to combine the methods developed in~\cite{EJ-Z} with the one
in the paper to produce a group $\Gamma$ with property (T) and similar
fixed point properties.}
As a corollary of Theorem \ref{main}, we recover a particular case of a theorem of Arzhantseva et al.~\cite{crowd}.
\begin{cor}[Arzhantseva, Bridson, Januszkiewicz, Leary, Minasyan, Swiat\-kow\-ski]
\label{Aal}
There is a finitely presented group $\Gamma$ that cannot act
by homeomorphisms on a contractible manifold
of finite dimension, or cellularly on a uniformly locally finite CAT(0) cell complex,
by homeomorphisms on a finite dimensional $\Z_p$-acyclic manifold
over $\Z_p$ for $p=2,3$ or on a compact manifold.
\end{cor}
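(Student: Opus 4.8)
The plan is to derive the corollary from Theorem~\ref{main} by showing that $\SL_\infty(\Z)$ acts trivially on each of the spaces in question; the passage from $\SL_\infty(\Z)$ to $\Gamma$ is the elementary observation that if a group $G$ is normally generated by a subgroup $H$ and $H$ lies in the kernel of a homomorphism $\phi\colon G\to\mathrm{Homeo}(X)$ (or into the group of cellular automorphisms of a complex), then, the kernel being normal in $G$, it contains the normal closure of $H$, i.e.\ all of $G$, so $\phi$ is trivial. Applied with $G=\Gamma$ and $H=\SL_\infty(\Z)$, it suffices to check that every action of $\SL_\infty(\Z)$ by homeomorphisms on a finite-dimensional contractible manifold, on a finite-dimensional $\Z_p$-acyclic manifold ($p=2,3$), on a compact manifold, and every cellular action on a uniformly locally finite $\mathrm{CAT}(0)$ cell complex, is trivial; a trivial $\Gamma$-action fixes the space pointwise, in particular has a global fixed point, and is not a nontrivial action on a compact manifold.

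To prove triviality of the $\SL_\infty(\Z)$-action I would reduce to the locally finite alternating group $A_\infty=\bigcup_n A_n\le\SL_\infty(\Z)$, embedded by even permutation matrices compatibly with the stabilizations $A_n\hookrightarrow A_{n+1}$ and $\SL_n(\Z)\hookrightarrow\SL_{n+1}(\Z)$. The key group-theoretic point is that $A_\infty$ normally generates $\SL_\infty(\Z)$: by the description of the normal subgroups of $\SL_n(\Z)$ for $n\ge 3$ (Margulis's normal subgroup theorem together with the congruence subgroup property of Bass--Milnor--Serre), no proper normal subgroup of $\SL_n(\Z)$ contains a nontrivial permutation matrix — such a matrix is neither central nor congruent to a scalar modulo any $m\ge 2$ — so $A_n$ normally generates $\SL_n(\Z)$, and taking the union over $n$ shows that $A_\infty$ normally generates $\SL_\infty(\Z)$, hence, by Theorem~\ref{main}, also $\Gamma$. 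By the observation of the first paragraph it now suffices to show that $A_\infty$ — equivalently, every $A_n$ with $n$ large relative to the space — acts trivially.

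For $n\ge 5$ the group $A_n$ is simple, so an action is either trivial or faithful and only the faithful ones need be excluded. On a manifold (finite-dimensional contractible, or $\Z_p$-acyclic, or compact) the group $A_n$ contains an elementary abelian $p$-subgroup whose rank grows linearly in $n$, so a faithful $A_n$-action would restrict to an effective action of a high-rank elementary abelian $p$-group; by classical Smith theory, in the form of the Mann--Su bound on the rank of such a group acting effectively on a manifold (a bound finite and controlled by the dimension and the mod-$p$ Betti numbers), this is impossible once $n$ exceeds a constant depending only on the manifold — which is precisely the content of the rigidity theorems of Farb and of Bridson--Vogtmann quoted in the introduction. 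On a uniformly locally finite $\mathrm{CAT}(0)$ cell complex $Y$, the finite group $A_n$ fixes a point, hence stabilizes a cell, and permutes the boundedly many vertices of that cell; for $n$ larger than the local finiteness constant the resulting homomorphism to a bounded symmetric group is trivial (a simple group of order exceeding $K!$ has no faithful action on $K$ points), so $A_n$ fixes those vertices, hence the whole (convex, Euclidean) cell, pointwise; propagating this through the connected complex — at each step $A_n$ still permutes only boundedly many incident cells, so is still trivial on them — shows that $A_n$ acts trivially on $Y$. This settles all four cases, and the corollary follows.

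The one genuine obstacle is insisting on \emph{triviality} rather than merely a global fixed point at the level of $\SL_\infty(\Z)$ (or $A_\infty$): a global fixed point is not inherited under passage to the normal closure, whereas a trivial action is, so it is essential to extract from the cited geometric results the conclusion ``$A_n$ acts trivially'', which comes for free precisely because $A_n$ is simple — this is what turns ``has a fixed point'' or ``admits no effective high-rank elementary abelian action'' into ``is trivial''. The remaining work — the normal-generation bookkeeping on the group side, and on the geometric side exactly the estimates recalled in the introduction — is routine, which is also why this argument recovers only the special case of the theorem of Arzhantseva et al.\ stated in Corollary~\ref{Aal}.
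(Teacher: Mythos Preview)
Your overall strategy is correct and coincides with the paper's: since $\Gamma$ is normally generated by a copy of $\SL_\infty(\Z)$ (Theorem~\ref{main}), it suffices to show that $\SL_\infty(\Z)$ acts trivially on each class of spaces, after which the kernel of the $\Gamma$-action, being normal and containing $\SL_\infty(\Z)$, must be all of $\Gamma$. The paper's proof of the corollary does precisely this, citing the results assembled in Section~\ref{SLaction} (Corollaries~\ref{noaction}, \ref{noisoaction}, \ref{nomodpaction} and Proposition~\ref{WZ}) for the triviality of the $\SL_\infty(\Z)$-action; the passage from $\SL_\infty(\Z)$ to $\Gamma$ is the same normal-generation observation you make in your first paragraph.

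Where you diverge is in \emph{how} you establish that $\SL_\infty(\Z)$ acts trivially. The paper works directly with $\SL_n(\Z)$ for large $n$: it invokes Farb's fixed-point theorem (Theorem~\ref{FarbS}) for the CAT(0) case and then propagates using Lemma~\ref{finiteaction}, and cites Bridson--Vogtmann and Weinberger (Theorems~\ref{BVcontractible}, \ref{BVmodp}, Proposition~\ref{WZ}) for the manifold cases, each time concluding via Lemma~\ref{normgenSLinfty}. You instead descend further to the alternating subgroup $A_\infty$ and exploit simplicity of $A_n$: Bruhat--Tits gives the CAT(0) fixed point for free (since $A_n$ is finite), and Smith theory/Mann--Su handles the manifold cases. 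This is a legitimate alternative; its one genuine gain is that in the CAT(0) case you avoid Farb's theorem entirely, while for manifolds you are essentially reconstructing the core of the Bridson--Vogtmann argument you then cite anyway. Given that Section~\ref{SLaction} already packages all four triviality statements for $\SL_\infty(\Z)$, your detour through $A_\infty$ is correct but redundant --- the paper's proof is a two-line citation of that section plus the normal-generation step. One small caution: the Mann--Su bound as usually stated is for compact manifolds, so for the finite-dimensional contractible case you really do need to lean on Bridson--Vogtmann's formulation rather than raw Mann--Su; you acknowledge this, but it is worth being explicit.
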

Using very different techniques in~\cite{crowd}, Arzhantseva et al.
actually prove the existence of finitely generated groups with much stronger fixed points properties and
that moreover can be chosen simple and with property (T).

That $\SL_n(\Z)$ (for $n\geq 3$) cannot act on a CAT(0) cell complex of dimension less than $n-1$
without a fixed point comes from the fact that $\SL_n(\Z)$
cannot act by semi-simple isometries on a CAT(0) space, see Farb's work on property FA$_n$~\cite{Farb}.
That $\SL_n(\Z)$ (for $n\geq 3$) cannot act non-trivially on a
contractible manifold of dimension less than $n$ is a deep result of
Bridson-Vogtmann~\cite{BV} and that $\SL_{\infty}(\Z)$ cannot act on a
compact manifold is a remark by Weinberger, see~\cite{WZ}.

The paper is organized as follows:
Section~\ref{rings} describes a finitely presented ring that contains matrices
of arbitrarily high dimension, and Section~\ref{magicfunctor} explains how to get
finitely presented groups out of those. Section~\ref{SLaction}
summarizes known results on the lack of action of $\SL_\infty({\Z})$ on various classes of spaces.
Section~\ref{G_0} explain how to get those groups to contain $\SL_\infty({\Z})$
and finally, Section~\ref{proofs} ties everything together
to build $\Gamma$ with the required properties.

{\em Acknowledgements:}
We thank Martin Bridson for Remark~\ref{tree},
Karen Vogtmann for explanations on~\cite{BV} and
David Fisher the many useful comments.
This paper started from conversation at the AIM workshop on percolation held in May 2008,
and we thank the Institute and the organizers for that opportunity.
}
\section{Crazy Rings}
\label{rings}
Let $S$ be a countable set. Let $\Mat_S(\Z)$ denote the ring of matrices where the rows and columns are indexed
by the elements in $S$ such that there are only finitely many non-zero entries in each row and column.
This ring naturally acts on the direct sum $\Z^{\oplus S}$ of copies of $\Z$ indexed by the set $S$:
every element of $\Z^{\oplus S}$ contains only finitely many non-zero entries.

We also define the two-sided ideal $\Mat_S^0(\Z)$ in $\Mat_S(\Z)$ consisting of all matrices
with finitely many non-zero entries. This is an ideal in $\Mat_S(\Z)$, because the matrices in $\Mat_S(\Z)$
contain only finitely many non-zero entries in each row and each column. Notice that
$\Mat_S^0(\Z)$ is not a subring of $\Mat_S(\Z)$ since it does not contain the unit element.
Let $E_{s,s'}$ denote the elementary matrix in $\Mat_S^0(\Z)$
which has $1$ in the $s$-th row and $s'$-th column and zeroes everywhere else.

Let $G$ be a group acting on the set $S$. There is a homomorphism
\begin{eqnarray*}
\phi_0: \Z[G] &\to& \Mat_S(\Z)\\
g&\mapsto&\sum_{s\in S}E_{s,gs}
\end{eqnarray*}
associated with this action, which sends each group element to the corresponding
permutation matrix.

The homomorphism $\phi_0$ defines left and right actions of the group algebra
$\Z[G]$ on $\Mat_S^0(\Z)$. This allows us to define the semi-direct product $R_{G,S}$ of $\Z[G]$ and $\Mat_S^0(\Z)$.
As an abelian group, the ring $R_{G,S}$ is just the direct sum $\Z[G]\oplus \Mat_S^0(\Z)$, but the multiplication is defined by
$$(f,m).(f',m)= (ff', \phi_0(f)m' + m\phi_0(f') + mm'),$$
where
$f,f'\in \Z[G]$ and $m,m' \in \Mat_S^0(\Z)$. In particular we have
$ g E_{s,s'} =  E_{gs,s'}$ and $E_{s,s'} g = E_{s,g^{-1}s'}$.
It is clear from the construction that $\Mat_S^0(\Z)$ is a two sided ideal in $R_{G,S}$.
The homomorphism $\phi_0$ can be extended,
using the natural embedding of $\Mat_S^0(\Z)$ into $\Mat_S(\Z)$,
to a homomorphism $\phi$ from $R_{G,S}$ to $\Mat_S(\Z)$.
Thus we have
$$
\Mat_S^0(\Z) \lhd \phi(R_{G,S}) \subset \Mat_S(\Z)
$$
which gives $\Z^{\oplus S}$ an $R_{G,S}$-module structure.
\begin{rem}
For many infinite groups $G$ and actions on sets $S$,
the homomorphisms $\phi_0$ and $\phi$ are injective
and we can define $R_{G,S}$ as the subring of $\Mat_S(\Z)$
generated by $\Mat_S^0(\Z)$ and the image of $\phi_0$.
However such definition does not give the same ring in general
(it is always false if the set $S$ is finite).
\end{rem}
\begin{defi}
For a group $G$ acting on a set $S$, we say that the action is \emph{almost transitive} if the $G$-action on $S$ has finitely many orbits. We say that the action is \emph{almost two-transitive} if the diagonal action of $G$ on $S\times S$ has finitely many orbits.\end{defi}
\begin{lemma}
\label{2tran}
(a) If a finitely generated group $G$ acts transitively (or almost transitively) on a set $S$, then the ring $R_{G,S}$ is finitely generated as an associative ring;

(b) If moreover $G$ is finitely presented, the action on $S$ is almost two-transitive,
and if the stabilizer in $G$ of a point in $S$ is finitely generated
then the ring $R_{G,S}$ is finitely presented as an associative ring.
\end{lemma}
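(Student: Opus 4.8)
\emph{Part (a).} I would exhibit an explicit finite generating set. Fix generators $g_1,\dots,g_k$ of $G$ and representatives $s_1,\dots,s_m$ of the finitely many $G$-orbits on $S$. Then $R_{G,S}$ is generated as a ring by $g_1^{\pm 1},\dots,g_k^{\pm 1}$ together with the elementary matrices $E_{s_i,s_j}$ ($1\le i,j\le m$): the subring they generate contains the image of $\Z[G]$ because $G$ is finitely generated, and since $gE_{s,s'}g' = E_{gs,\,(g')^{-1}s'}$, conjugating the $E_{s_i,s_j}$ by elements of $G$ produces every $E_{s,t}$, and these span $\Mat^0_S(\Z)$; hence the subring is all of $R_{G,S}=\Z[G]\oplus\Mat^0_S(\Z)$. (If $G$ acts transitively one may instead use the single idempotent $e:=E_{s_0,s_0}$ at a basepoint $s_0$.)

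\emph{Part (b).} Since an almost two-transitive action is in particular almost transitive, and the general case merely requires carrying one basepoint per orbit, I would assume $G$ acts transitively with basepoint $s_0$ and set $H:=\mathrm{Stab}_G(s_0)$, finitely generated by hypothesis, say by words $h_1,\dots,h_l$ in the $g_i$. The arithmetic I would rely on is $E_{s,s'}E_{t,t'}=\delta_{s',t}E_{s,t'}$ together with $gE_{s,s'}=E_{gs,s'}$ and $E_{s,s'}g=E_{s,g^{-1}s'}$; consequently $eh=he=e$ for all $h\in H$, and $ege$ equals $e$ when $g\in H$ and $0$ otherwise, depending moreover only on the double coset $HgH$ --- and almost two-transitivity says precisely that $H\backslash G/H$ is finite (via $S\cong G/H$). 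Choosing words $t_1,\dots,t_r$ in the $g_i$ representing the nontrivial double cosets, I propose the presentation with generators $g_1^{\pm 1},\dots,g_k^{\pm 1},e$ and relations: (i) $g_ig_i^{-1}=g_i^{-1}g_i=1$ together with the ring-lift of a finite presentation of $G$ (using that $G$ is finitely presented and that $\Z[-]$ carries a group presentation to a ring presentation of the group ring); (ii) $e^2=e$; (iii) $eh_i=h_ie=e$; (iv) $et_je=0$. That all of these hold in $R_{G,S}$ is a direct computation from the identities above.

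\emph{Completeness, and the main obstacle.} The heart of the proof is showing the tautological surjection $\widetilde{R}\to R_{G,S}$ from the abstractly presented ring $\widetilde{R}$ is injective. I would argue via a normal form: by (i), any maximal $e$-free subword of a word in the generators collapses to a single element of $G$, so every word equals in $\widetilde{R}$ some $g^{(0)}eg^{(1)}e\cdots eg^{(n)}$ with $g^{(i)}\in G$; relations (ii)--(iv) --- using that (iii) yields $eh=he=e$ in $\widetilde{R}$ for all $h\in H$ and hence, with the double-coset reduction, that every interior block $eg^{(i)}e$ ($1\le i\le n-1$) equals $e$ or $0$ --- collapse this word to $0$, to an element of $G$ when $n=0$, or to $g^{(0)}eg^{(n)}$. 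Relation (iii) further shows $g^{(0)}eg^{(n)}$ depends in $\widetilde{R}$ only on the pair $(g^{(0)}s_0,\,(g^{(n)})^{-1}s_0)\in S\times S$, so $\widetilde{R}$ is spanned as an abelian group by $\{g:g\in G\}\cup\{E_{s,t}:s,t\in S\}$; the surjection carries this spanning set onto the $\Z$-basis $G\sqcup\{E_{s,t}\}$ of $R_{G,S}=\Z[G]\oplus\Mat^0_S(\Z)$, which forces it to be an isomorphism. I expect the delicate point to be exactly this completeness step --- verifying that the \emph{finitely many} relations (iii) and (iv) genuinely suffice to reduce every interior $eg^{(i)}e$, which is where finite generation of the stabilizer and almost two-transitivity are consumed. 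For a non-transitive almost transitive action one runs the same argument with the $m^2$ generators $E_{s_i,s_j}$ of part (a), the finitely many double cosets $H_i\backslash G/H_j$, and the corresponding normal form.
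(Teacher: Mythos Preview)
Your proposal is correct and follows essentially the same route as the paper: the same finite generating set (group generators plus one idempotent per orbit--pair) for (a), and the same four families of relations---the group presentation, $e^2=e$, $eh=he=e$ for stabilizer generators, and $ete=0$ for double-coset representatives---for (b). Your normal-form argument for injectivity is in fact more detailed than the paper's own proof, which only sketches why these relations suffice.
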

\begin{proof}
(a) If the action is transitive, then the ring $R_{G,S}$ is generated by the generators of $G$ (and their inverses)
and $E_{s,s}$ for some fixed $s\in S$. Indeed, for any $g,g'\in G$,
$$
\phi_0(g) E_{s,s} \phi_0(g') =
\left(\sum_{t\in S}E_{t,gt}\right)E_{s,s}\left(\sum_{u\in S}E_{u,g'u}\right)
=E_{g^{-1}s, g's}.
$$
For an almost transitive action, one adds an extra generator for every pair of orbits of $G$ in $S$.

(b) For simplicity we will assume that the action of $G$ on $S$ is transitive.
Let $G=\langle X | W\rangle$ be a finite presentation of the group $G$.
Fix a point $s_0\in S$ and let $Y$ be a finite generating set for the stabilizer of $s_0$ in $G$.
Let $T\subseteq G$ be a finite set such that $\{(s_0,ts_0)\}_{t\in T}$ is a set of representatives
of the orbits of $G$ on $S\times S\setminus\Delta$, where $\Delta$ is the diagonal in $S \times S$.
Let $A$ be the ring generated by $X\cup X^{-1}\cup E$, where $E$ denotes the matrix $E_{s_0,s_0}$,
subject to the following relations:
\begin{eqnarray*}
W, & &\\
y E &=& E y = E \hbox{ for all }y\in Y,\\
E E &=& E,\\
E t E &=& 0\hbox{ for all }t \in T.
\end{eqnarray*}
Then the subring generated by $X\cup X^{-1}$ is isomorphic to $\Z[G]$.
The second relation gives a bijection between the set $G E G$ and the set of all
elementary matrices $E_{s',s''}$ where $s',s''\in S$.
Finally, the last two relations show that the elements $E_{s,s'}$ do
multiply as the elementary matrices in $\Mat_S^0(\Z)$. Therefore, the ring $A$ is isomorphic to $R_{G,S}$.
\end{proof}
In the main construction of the paper we need an example of an action which satisfies
the conditions of Lemma~\ref{2tran}(b). Such actions are hard to come by and all examples known to the authors are related to the Thompson's groups.
\begin{ex}
For any of the Thompson's groups $F$ or $T$
acting on the diadic points in the interval $(0,1)$, the action is almost two-transitive. Indeed, on can think of $F$ as the group of piecewise linear functions with the slopes being powers of 2 and diadic break points. It is then a classical result (see for instance~\cite{CFP}) that for diadic numbers $a,b,x,y\in(0,1)$ so that $a<b$ and $x<y$, there is an element $f\in F$ such that $f(a)=x$ and $f(b)=y$.
It is known that both $F$ and $T$ are finitely presented and that the stabilizer in $F$ (respectively $T$) of a dyadic  point in $(0,1)$ is isomorphic to $F \times F$ (respectively $F$).
\end{ex}
%
\section{The functors $\St_n$ and $\EL_n$}
\label{magicfunctor}
In this section we will describe two functors from the category of unital associative rings
to the category of groups.
For $R$ a unital associative ring, the group $\St_n(R)$, where $n\geq 3$, is generated by
$e_{ij}(r)$ for $1\leq i,j \leq n$ and $r\in R$ subject to the relations:
\begin{equation}\label{Stn}
\begin{array}{l}
\displaystyle
e_{ij}(r_1)\cdot e_{ij}(r_2) = e_{ij}(r_1+r_2)
\\
\displaystyle
\left[e_{ij}(r_1), e_{jk}(r_2)\right] = e_{ik}(r_1 r_2)
\\
\displaystyle
\left[e_{ij}(r_1), e_{pq}(r_2)\right] = 1 \quad \mbox{if } j\not = p \mbox{ and } i\not=q.
\end{array}
\end{equation}
We denote by $\EL_n(R)$ the image of $\St_n(R)$ in the multiplicative group of $n\times n$ matrices
with entries in $R$ under the obvious map sending
$e_{ij}(r)$ to the elementary matrix with $ij$-entry equal to $r$.

The following facts are (relatively) well known:
\begin{lemma}
\label{factsaboutSt}
(a) If $R$ is a finitely generated ring and $n\geq 3$ then both groups
$\St_n(R)$ and $\EL_n(R)$ are finitely generated;

(b) If $R$ is finitely presented and $n\geq 4$ then $\St_n(R)$ is finitely presented;

(c) If $n \geq 3$ then both $\St_n(R)$ and $\EL_n(R)$ are normally generated by $e_{12}(1)$.

(d) If $V$ is an $R$-module then $V^{\oplus n}$ is both an $\St_n(R)$-module and an $\EL_n(R)$-module;
\end{lemma}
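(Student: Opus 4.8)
The plan is to prove the four parts in order, since each uses the presentation~\eqref{Stn} more or less directly. For part~(a), if $R$ is generated as a ring by a finite set $\{r_1,\dots,r_k\}$, I claim $\St_n(R)$ is generated by the finitely many elements $e_{ij}(r_\ell)$ together with $e_{ij}(1)$, for $1\le i\ne j\le n$. Indeed, the first relation in~\eqref{Stn} shows the subgroup generated by these is closed under the operation taking $e_{ij}(r)$ to $e_{ij}(r+r')$, so it contains $e_{ij}(r)$ for every $r$ in the additive subgroup generated by the $r_\ell$; the second (commutator) relation, using $n\ge 3$ so that a third index $k$ is available, shows it is also closed under $e_{ij}(r), e_{jk}(r')\mapsto e_{ik}(rr')$, hence contains $e_{ij}(r)$ for every $r$ in the subring generated by the $r_\ell$, which is all of $R$. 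Since $\EL_n(R)$ is a quotient of $\St_n(R)$, it is finitely generated as well.

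For part~(b), suppose $R$ is finitely presented as a ring, say with finite generating set $F$ and finitely many relators expressing that certain ring-words in $F$ vanish or coincide. I would take as generators of $\St_n(R)$ the finite set $\{e_{ij}(f) : 1\le i\ne j\le n,\ f\in F\cup\{1\}\}$ as in part~(a), and as relations: the relations~\eqref{Stn} restricted to arguments in $F\cup\{1\}$ (finitely many), together with one relation $e_{ij}(w)=e_{ij}(w')$ for each ring relation $w=w'$ of $R$ and each pair $i\ne j$ — here $e_{ij}(w)$ denotes the word in the generators obtained by expanding $w$ using the additive and multiplicative relations, which requires a fixed choice of a third index and so uses $n\ge 4$ to keep such expansions uniform and independent of the chosen auxiliary indices (this is the Kassabov--Nikolov / van der Kallen style argument). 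The content is that these finitely many relations already imply all of~\eqref{Stn} for arbitrary $r\in R$; verifying this bookkeeping — that commutator identities among the $e_{ij}$ with independent auxiliary indices let one derive $e_{ij}(r_1 r_2)$-type relations from the presentation of $R$ — is the main obstacle, and it is exactly the point where $n\ge 4$ rather than $n\ge 3$ is needed. I would cite the standard references for this ($\St_n$ of a finitely presented ring is finitely presented for $n\ge 4$) rather than reprove it.

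Part~(c) is a short computation from~\eqref{Stn}: the second relation gives $[e_{12}(1),e_{2k}(r)]=e_{1k}(r)$ and $[e_{k1}(r),e_{12}(1)]=e_{k2}(r)$ for $k\ne 1,2$, and conjugating $e_{12}(1)$ by a suitable product of the standard permutation-type elements $w_{ij}=e_{ij}(1)e_{ji}(-1)e_{ij}(1)$ moves the entry to any position $e_{ij}(\pm1)$; combining these and using the first relation to get arbitrary $r\in R$ in any off-diagonal slot, one sees the normal closure of $e_{12}(1)$ contains every generator $e_{ij}(r)$, hence is all of $\St_n(R)$. The same statement for $\EL_n(R)$ follows by passing to the quotient. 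Finally, part~(d) is essentially a definition-check: given an $R$-module $V$, the ring $R$ acts on the left of column vectors in $V^{\oplus n}$, elementary matrices with entries in $R$ act on $V^{\oplus n}$ in the evident way, and one checks that the defining relations~\eqref{Stn} are satisfied by these actual operators on $V^{\oplus n}$ — so the action of $\EL_n(R)\subseteq\Mat_n(R)$ on $V^{\oplus n}$ is well-defined, and it pulls back along $\St_n(R)\twoheadrightarrow\EL_n(R)$ to an $\St_n(R)$-module structure.
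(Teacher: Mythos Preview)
Your proposal is correct and follows essentially the same route as the paper: part~(a) via closure under the additive and commutator relations, part~(b) by reducing to the free-ring case and citing the literature (the paper invokes Kristic--McCool~\cite{KMcC} rather than van der Kallen, but the strategy is identical), part~(c) by commutator identities (the paper does it purely with $[e_{12}(1),e_{2j}(r)]$ and $[e_{i1}(1),e_{1j}(r)]$, without the $w_{ij}$ detour, but both arguments are standard), and part~(d) via the $\Mat_n(R)$-module structure on $V^{\oplus n}$.
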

\begin{proof}
Let $X$ be a finite generating set for the ring $R$, and assume that $X$ contains $1$.

(a) The group $\St_n(R)$ is generated by
$$
\{e_{ij}(x)\hbox{ such that }x\in X\}.
$$
Hence the group $\EL_n(R)$ is generated by the image of this generating set.

(b) In \cite{KMcC}, it is shown that if $\Z\langle T\rangle$ is the free associative ring
generated by a finite set $T$,
then $\St_n(\Z\langle T\rangle)$ is finitely presented. So, if $R=\Z\langle X\rangle/\langle Y \rangle$ is a finitely presented ring, for each relation $y\in Y$ in the presentation of $R$, there will be at most one relation to add to the presentation of $\St_n(\Z\langle T\rangle)$ to get the presentation of $\St_n(R)$.

(c) Any generator $e_{ij}(r)$ can be obtained form $e_{12}(1)$ using several commutators since $[e_{12}(1),e_{2j}(r)]=e_{1j}(r)$ and $[e_{i1}(1),e_{1j}(r)]=e_{ij}(r)$.

(d) That part follows from $V^{\oplus n}$ being naturally a $\Mat_n(R)$-module.
\end{proof}

The kernel of the projection from $\St_n(R)$ to $\EL_n(R)$ is denoted by $K_{2,n}(R)$.
There are several ways to construct elements in this kernel, as shown in the following:
\begin{lemma}
\label{k2}
(a) Let $E$ be a idempotent element in $R$, i.e., $E^2=E$. Then the product
$\tilde r(E)=\left( e_{12}(-E)e_{21}(E)e_{12}(-E)\right)^4$ is in $K_{2,n}(R)$, i.e.,
in the kernel of the projection from $\St_n(R)$ to $\EL_n(R)$;

(b) If $R=\Z$ then the group $K_{2,n}(\Z)$ is abelian of order $2$ and is generated by $\tilde r(1)$.
\end{lemma}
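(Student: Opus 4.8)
For part~(a) the plan is a direct computation in $\EL_n(R)$. Since $\tilde r(E)$ is a word in the generators $e_{12}$ and $e_{21}$ only, its image is supported in the top left $2\times 2$ block, the remaining diagonal entries staying $1$; so I would simply multiply the three matrices $\bigl(\begin{smallmatrix}1&-E\\0&1\end{smallmatrix}\bigr)$, $\bigl(\begin{smallmatrix}1&0\\E&1\end{smallmatrix}\bigr)$, $\bigl(\begin{smallmatrix}1&-E\\0&1\end{smallmatrix}\bigr)$ and use $E^2=E$ to simplify. This gives $M:=\bigl(\begin{smallmatrix}1-E&-E\\E&1-E\end{smallmatrix}\bigr)$ for the image of $w(E):=e_{12}(-E)e_{21}(E)e_{12}(-E)$. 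Squaring, again using $E^2=E$, yields $M^2=(1-2E)\,I_2$, and since $E^2=E$ forces $(1-2E)^2=1-4E+4E^2=1$, one gets $M^4=I$. Hence $\tilde r(E)=w(E)^4$ maps to $M^4=I$ in $\EL_n(R)$ and therefore lies in $K_{2,n}(R)$. The only real observation here is that $1-2E$ is an involution as soon as $E$ is idempotent.

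For part~(b) the plan is to identify $K_{2,n}(\Z)$ with a classical invariant. First I would record the standard facts: $\EL_n(\Z)=\SL_n(\Z)$ for $n\geq 3$; $\St_n(\Z)$ is perfect, since each generator $e_{ik}(r)=[e_{ij}(r),e_{jk}(1)]$ is a commutator by the second relation of \eqref{Stn} (using $n\geq 3$); and $K_{2,n}(\Z)=\ker(\St_n(\Z)\to\SL_n(\Z))$ is central in $\St_n(\Z)$, hence abelian — centrality is classical for $n\geq 5$ and is known to persist over $\Z$ for $n\geq 3$. Thus $\St_n(\Z)\to\SL_n(\Z)$ is a central extension of a perfect group, and I would then invoke the stability statement that the comparison map $K_{2,n}(\Z)\to K_2(\Z)$ is an isomorphism for $n\geq 3$, together with Milnor's computation $K_2(\Z)\cong\Z/2$, to conclude $K_{2,n}(\Z)\cong\Z/2$.

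It remains to check that $\tilde r(1)$ is the nonzero element. By part~(a) with $E=1$ we know $\tilde r(1)=w(1)^4\in K_{2,n}(\Z)$; here $w(1)=e_{12}(-1)e_{21}(1)e_{12}(-1)$ is a Weyl element, and using the standard identities $w_{ij}(-1)=w_{ij}(1)^{-1}$ and $h_{12}(-1)=w_{12}(-1)w_{12}(1)^{-1}$ one rewrites $\tilde r(1)$ as $h_{12}(-1)^{2}$, i.e.\ as the Steinberg symbol $\{-1,-1\}$ (up to inversion, which is irrelevant since the element has order dividing $2$). That $\{-1,-1\}$ is nonzero in $K_2(\Z)$ is detected, for instance, by the real Hilbert symbol, which sends it to $(-1,-1)_{\R}=-1$. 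Hence $\tilde r(1)$ generates $K_{2,n}(\Z)\cong\Z/2$. I expect essentially all of the difficulty to be in part~(b): part~(a) is a routine calculation, whereas (b) repackages nontrivial input from algebraic $K$-theory — Milnor's $K_2(\Z)=\Z/2$, centrality of $K_{2,n}$, and the unstable-to-stable comparison in the delicate range $n=3,4$ — and then matches the explicit word $\tilde r(1)$ with the generating symbol. If one wished to avoid the $n=3,4$ subtleties, a safe fallback is to prove (b) for large $n$ first and descend along the maps $\St_n(\Z)\to\St_{n+1}(\Z)$.
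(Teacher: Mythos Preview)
Your argument is correct and follows the same route as the paper: a direct $2\times 2$ matrix computation showing $M^2=(1-2E)I_2$ and hence $M^4=I$ for part~(a), and for part~(b) the identification of $\tilde r(1)$ with the Steinberg symbol $\{-1,-1\}$ together with Milnor's determination of $K_{2,n}(\Z)$ (the paper simply cites Corollary~10.2 of \cite{milnor}, whereas you spell out the intermediate steps on centrality, stability, and the Hilbert-symbol detection).
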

\begin{proof}
(a) A direct computation shows that the image of the product
$$
\left( e_{12}(-E)e_{21}(E)e_{12}(-E)\right)^2
$$
in $\EL_n(R)$ is a
diagonal matrix with $1-2E$ and $1$'s on the diagonal. Therefore its square is the identity, since
$(1-2E)^2=1$.

(b) The element $\tilde r(E)$ is equal to the Steinberg symbol $\{ 1-2E, 1-2E\}$, thus
$\tilde r(1)= \{-1,-1\}$, which is known to be the only non-trivial
element in $K_{2,n}(\Z)$, see Corollary 10.2 in~\cite{milnor}.
\end{proof}

\section{The groups $\Gamma_0$ and $\Delta_0$}
\label{G_0}
Let $G$ be a group acting on an infinite set $S$, such that the conditions of
Lemma~\ref{2tran} (b) are satisfied.
We will be looking at a quotient of
the Steinberg group $\St_4(R_{G,S})$ over the ring $R_{G,S}$, discussed in Section~\ref{magicfunctor}.

Since the ring $R_{G,S}$ acts on $\Z^{\oplus S}$ there is a natural action of
$\St_4(R_{G,S})$ and of $\EL_4(R_{G,S})$ on
$$
V = \Z^{\oplus S} \oplus \Z^{\oplus S} \oplus \Z^{\oplus S} \oplus \Z^{\oplus S}.
$$
Let $E$ be the generator of Lemma~\ref{2tran} (b), which satisfies $E^2=E$.
Denote by $\tilde{r}(E)$ the product $\left( e_{12}(-E)e_{21}(E)e_{12}(-E)\right)^4$.
\begin{defi}
Define
$$
\Gamma_0=\St_4(R_{G,S})/\langle \tilde r(E) \rangle.
$$
\end{defi}
\begin{lemma}
\label{G0}
The quotient group $\Gamma_0$ is finitely presented and acts naturally on $V$, i.e.,
$\tilde r(E)$ is in the kernel of the map $\St_4(R_{G,S}) \to \EL_4(R_{G,S})$.
\end{lemma}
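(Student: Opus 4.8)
The plan is to split the statement into its two assertions and handle each separately. For finite presentability, I would invoke Lemma~\ref{factsaboutSt}(b): since $R_{G,S}$ is finitely presented as an associative ring by Lemma~\ref{2tran}(b) (the hypotheses of which we have assumed on $G$ and $S$), and since $n=4\geq 4$, the Steinberg group $\St_4(R_{G,S})$ is finitely presented. The group $\Gamma_0$ is obtained from it by quotienting out the normal closure of the single element $\tilde r(E)$, so adding one more relator to the finite presentation of $\St_4(R_{G,S})$ yields a finite presentation of $\Gamma_0$. This is the easy half.

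For the second assertion, I need to show that $\tilde r(E)$ lies in the kernel $K_{2,4}(R_{G,S})$ of the projection $\St_4(R_{G,S})\to\EL_4(R_{G,S})$; once this is known, the action of $\St_4(R_{G,S})$ on $V$ (which exists by Lemma~\ref{factsaboutSt}(d), since $V=\bigl(\Z^{\oplus S}\bigr)^{\oplus 4}$ and $\Z^{\oplus S}$ is an $R_{G,S}$-module) descends to an action of the quotient $\Gamma_0$. But the fact that $\tilde r(E)\in K_{2,4}(R_{G,S})$ is exactly Lemma~\ref{k2}(a) applied to the idempotent $E=E_{s_0,s_0}\in R_{G,S}$, which satisfies $E^2=E$ by construction. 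So I would simply cite Lemma~\ref{k2}(a): the image of $\bigl(e_{12}(-E)e_{21}(E)e_{12}(-E)\bigr)^2$ in $\EL_4(R_{G,S})$ is diagonal with entries $1-2E$ and $1$, and since $(1-2E)^2=1-4E+4E^2=1$, its square is the identity matrix, so $\tilde r(E)$ maps to the identity in $\EL_4(R_{G,S})$.

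Putting these together: the relator we kill in passing from $\St_4(R_{G,S})$ to $\Gamma_0$ already acts trivially on $V$, so the $\St_4(R_{G,S})$-action on $V$ factors through $\Gamma_0$, giving the natural action claimed. I do not anticipate a serious obstacle here, since both halves are direct applications of lemmas already established; the only mild point of care is making sure the ring-theoretic hypotheses needed for Lemma~\ref{2tran}(b) and hence Lemma~\ref{factsaboutSt}(b) are genuinely in force, which they are by the standing assumption on $(G,S)$ at the start of this section, with $E$ being the distinguished idempotent generator $E_{s_0,s_0}$ furnished there.
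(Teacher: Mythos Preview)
Your proposal is correct and follows essentially the same approach as the paper's own proof: invoke Lemma~\ref{factsaboutSt}(b) (via Lemma~\ref{2tran}(b)) for finite presentability of $\St_4(R_{G,S})$, add the single relator $\tilde r(E)$, and cite Lemma~\ref{k2}(a) to see that this relator lies in the kernel of the projection to $\EL_4(R_{G,S})$. Your write-up is simply more detailed in spelling out why the action on $V$ descends.
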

\begin{proof}
By Lemma~\ref{factsaboutSt} (b), the group  $\St_4(R_{G,S})$ is finitely presented.
Adding the one extra relation $\tilde r (E)=1$ to this presentation gives
a presentation of $\Gamma_0$. By Lemma~\ref{k2} this relation is in the kernel of the projection.
\end{proof}
\begin{defi}
Let $\Delta_0$ be the subgroup of $\Gamma_0$ generated by
$$
\Delta_0=\left\langle e_{i,j}(x)\hbox{ such that }x \in I_{G,S}\right\rangle,
$$
where $I_{G,S}=\langle E \rangle \subseteq R_{G,S}$ is the standard copy of $\Mat^0_S(\mathbb Z)$ in $R_{G,S}$,
which is the same as the ideal corresponding to $\Mat_S^0(\Z)$ in the description of $R_{G,S}$
as a semi-direct product.
\end{defi}
\begin{lemma}
The group $\Delta_0$ is isomorphic to $\SL_\infty(\Z)$.
\end{lemma}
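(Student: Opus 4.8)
The plan is to present $\Delta_0$ as a quotient of the Steinberg group of the ideal $I_{G,S}=\Mat_S^0(\Z)$ and then to check that the one defining relation $\tilde r(E)=1$ of $\Gamma_0$ collapses precisely the copy of $K_2(\Z)=\Z/2$ that distinguishes that Steinberg group from $\SL_\infty(\Z)$. First I would fix a bijection $S\cong\N$ carrying the base point $s_0$ to $1$; this identifies $\Mat_S^0(\Z)$ with the non-unital ring $\varinjlim_k\Mat_k(\Z)$ of finitary integer matrices and the idempotent $E=E_{s_0,s_0}$ with $E_{1,1}$. Applying $\St_4$ (in its evident extension to non-unital rings, the defining relations of~\eqref{Stn} needing no unit) to the ring inclusion $\Mat_S^0(\Z)\hookrightarrow R_{G,S}$ and composing with the quotient $\St_4(R_{G,S})\to\Gamma_0$ produces a homomorphism $\pi\colon\St_4(\Mat_S^0(\Z))\to\Gamma_0$ whose image is generated by the $e_{ij}(x)$ with $x\in I_{G,S}$; that is, $\pi$ is a surjection onto $\Delta_0$.

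Next I would use the $\Gamma_0$-module $V=(\Z^{\oplus S})^{\oplus 4}\cong\Z^{\oplus\N}$ of Lemma~\ref{G0}. The composite $\rho\colon\St_4(\Mat_S^0(\Z))\xrightarrow{\pi}\Delta_0\subseteq\Gamma_0\to\GL(V)$ is the canonical surjection onto $\EL_4(\Mat_S^0(\Z))$ — the subgroup of $\GL(V)$ generated by the finitary matrices $I+x^{(ij)}$, $x\in\Mat_S^0(\Z)$, $i\neq j$, where the superscript records the block in which $x$ is placed — followed by the tautological inclusion $\EL_4(\Mat_S^0(\Z))\subseteq\GL(V)$. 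A short commutator computation (of the type in Lemma~\ref{factsaboutSt}(c), using a third block as intermediary) shows that $\operatorname{im}\rho$ contains every single-entry elementary matrix $E_{a,b}(1)$ with $a\neq b$ in $\{1,\dots,4\}\times S$; since each generator $I+x^{(ij)}$ is unipotent, $\operatorname{im}\rho$ is also contained in the group of finitary integer matrices of determinant $1$. Hence $\operatorname{im}\rho=\EL_4(\Mat_S^0(\Z))=\SL_\infty(\Z)$.

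The core of the proof is then the identification $\ker\rho=\langle\tilde r(E)\rangle$, a group of order~$2$. Because $\rho$ is a surjection onto $\EL_4(\Mat_S^0(\Z))$ followed by an injection, $\ker\rho$ is the kernel of $\St_4(\Mat_S^0(\Z))\to\EL_4(\Mat_S^0(\Z))$. Writing $\Mat_S^0(\Z)=\varinjlim_k\Mat_k(\Z)$, commuting $\St_4$ and $\EL_4$ past the colimit, and invoking Morita invariance of the Steinberg group ($\St_4(\Mat_k(\Z))\cong\St_{4k}(\Z)$ and $\EL_4(\Mat_k(\Z))\cong\EL_{4k}(\Z)$, compatibly with the corner inclusions), this kernel becomes $\varinjlim_k K_{2,4k}(\Z)=K_2(\Z)$, which is cyclic of order~$2$ generated by the class of $\tilde r(1)$ by Lemma~\ref{k2}(b) and $K_2(\Z)=\Z/2$. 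Under $E=E_{1,1}$ the element $\tilde r(E)$ maps to $\tilde r(1)\in\St_4(\Z)$, so $\ker\rho=\langle\tilde r(E)\rangle$ as claimed. To finish: $\ker\pi\subseteq\ker\rho$ since $\rho$ factors through $\pi$, while $\tilde r(E)\in\ker\pi$ because $\pi(\tilde r(E))$ is the image in $\Gamma_0$ of the element $\tilde r(E)\in\St_4(R_{G,S})$ that is killed in the very definition of $\Gamma_0$. Thus $\langle\tilde r(E)\rangle\subseteq\ker\pi\subseteq\ker\rho=\langle\tilde r(E)\rangle$, so $\ker\pi=\ker\rho$ and $\Delta_0=\St_4(\Mat_S^0(\Z))/\ker\pi\cong\operatorname{im}\rho=\SL_\infty(\Z)$.

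The one genuinely non-routine ingredient is the computation of $\ker\rho$: it rests on the Morita isomorphism $\St_4(\Mat_k(\Z))\cong\St_{4k}(\Z)$ — i.e. on the fact that passing from $\Z$ to its matrix ring while enlarging the size of the Steinberg group does not alter the unstable $K_2$ — together with the knowledge that the unstable groups $K_{2,4k}(\Z)$ have already stabilised to $\Z/2$. Everything else — the two surjections, the factorisation of $\rho$ through $\EL_4$, and the elementary-matrix identities — is bookkeeping.
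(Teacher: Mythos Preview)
Your proposal is correct and follows essentially the same route as the paper: both identify $\St_4(\Mat_S^0(\Z))$ with $\St_\infty(\Z)$ via the Morita/block-matrix isomorphism $\St_n(\Mat_k(R))\cong\St_{nk}(R)$ passed to the limit $k\to\infty$, and then observe that the single extra relation $\tilde r(E)$ corresponds (up to conjugacy, hence equality in the central $K_2$) to the generator $\tilde r(1)$ of $K_{2,\infty}(\Z)=\Z/2$, so that $\Delta_0\cong\SL_\infty(\Z)$. Your write-up is more carefully structured---explicitly naming the surjection $\pi$ and factoring through the action on $V$ to pin down $\ker\pi=\ker\rho$---but the substance is identical to the paper's terser argument.
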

\begin{proof}
Since the set $S$ is countable we can list its elements $\{s_1,s_2,\dots,s_n,\dots\}$.
The group $\Delta_0$ is generated by the elements $e_{ij}(E_{s_p,s_q})$ for $p,q \in \N$.
We want to show that these elements generate a group isomorphic to $\SL_\infty(\Z)$. More precisely, they correspond to generators $e_{(i,s_p),(j,s_q)}$ of the
Steinberg group where the rows/columns are indexed by the elements in the countable set
$\{1,2,3,4\} \times S$. It is enough to show that these elements satisfy all
relations of the infinite dimensional analog of the Steinberg group over $\Z$, which
can be verified directly (this follow from the isomorphism $\St_n(\Mat_k(R)) = \St_{nk}(R)$,
by taking a limit when $k \to \infty$).
The element $\tilde r(E)$ was chosen to have its image in $\St_\infty(\Z)$ conjugated to
$\tilde r(1)$, which is the generator of $K_{2,n}(\Z)$, therefore $\Delta_0$ is
isomorphic to $\SL_\infty(\Z)$.
\end{proof}
\begin{rem}
If we drop the assumption that the set $S$ is countable then the group $\Delta_0$ is not isomorphic to $\SL_\infty(\Z)$ (since it is not countable) but to $\SL_\aleph(\Z)$ for some cardinal $\aleph = |S|$.
However this group contains and is normally generated by $\SL_\infty(\Z)$, thus
all results from Section~\ref{SLaction} still hold.
\end{rem}
\begin{lemma}
\label{actionV}
Let $g= e_{12}(1)\in \Gamma_0$ and $h=e_{34}(E_{s,s})\in\Delta_0<\Gamma_0$, then

(a) The element $g$ normally generates $\Gamma_0$;

(b) The elements $g$ and $h$ commute;

(c) Both $g$ and $h$ act on $V$ with infinitely many fixed points,
infinitely many infinite orbits and no finite orbits apart from fixed points.
\end{lemma}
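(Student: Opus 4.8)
The plan is to verify the three assertions by direct computation, using the explicit description of the action of $\St_4(R_{G,S})$ on $V=(\Z^{\oplus S})^{\oplus 4}$ coming from Lemma~\ref{factsaboutSt}(d) and the fact (from Lemma~\ref{G0}) that the action factors through $\EL_4(R_{G,S})$. First, part (a) is immediate: by Lemma~\ref{factsaboutSt}(c) the group $\St_4(R_{G,S})$, and hence its quotient $\Gamma_0$, is normally generated by $e_{12}(1)=g$. For part (b), note that $g=e_{12}(1)$ and $h=e_{34}(E_{s,s})$ are of the form $e_{ij}(r_1)$ and $e_{pq}(r_2)$ with $\{i,j\}=\{1,2\}$ and $\{p,q\}=\{3,4\}$ disjoint, so $j\neq p$ and $i\neq q$; the third Steinberg relation in~\eqref{Stn} gives $[g,h]=1$ directly in $\St_4(R_{G,S})$, hence in $\Gamma_0$.

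For part (c), I would work with the images of $g$ and $h$ in $\EL_4(R_{G,S})\subset \Mat_4(\Mat_S(\Z))$ acting on $V$. Write a vector in $V$ as a $4$-tuple $(v_1,v_2,v_3,v_4)$ with each $v_k\in\Z^{\oplus S}$. The matrix of $g=e_{12}(1)$ is the identity plus the identity-of-$\Mat_S(\Z)$ in the $(1,2)$ slot, so $g\cdot(v_1,v_2,v_3,v_4)=(v_1+v_2,\,v_2,\,v_3,\,v_4)$; similarly $h=e_{34}(E_{s,s})$ acts by $(v_1,v_2,v_3,v_4)\mapsto(v_1,v_2,\,v_3+E_{s,s}v_4,\,v_4)$, where $E_{s,s}v_4$ is the vector whose $s$-entry equals the $s$-entry of $v_4$ and whose other entries vanish. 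Both are unipotent, so the orbit of a vector is $\{x, x+y, x+2y,\dots\}$ for the appropriate increment $y$. For $g$: the increment is $(v_2,0,0,0)$, which is zero precisely when $v_2=0$; thus the fixed-point set is $\{v_2=0\}$, an infinite (indeed infinite-rank) subgroup, and whenever $v_2\neq 0$ the orbit $\{x+n(v_2,0,0,0): n\in\Z\}$ is infinite. Since the increment is either $0$ or of infinite order in the torsion-free group $V$, there are no finite orbits other than fixed points, and there are infinitely many infinite orbits (e.g. take $v_2$ ranging over distinct nonzero elements with all other coordinates $0$, giving orbits that are pairwise disjoint). For $h$: the increment is $(0,0,E_{s,s}v_4,0)$, which vanishes exactly when the $s$-coordinate of $v_4$ is zero; this is again an infinite-index-free description giving infinitely many fixed points, infinitely many infinite orbits (vary the $s$-coordinate of $v_4$ and fix everything else), and no nontrivial finite orbits by torsion-freeness of $V$.

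The one point deserving care — and the only place a reader might want a sentence of justification — is that the action of $g$ and $h$ on $V$ really is given by these explicit matrices, i.e. that passing to $\Gamma_0$ and then to $\EL_4$ does not change anything: this is exactly the content of Lemma~\ref{G0}, which says the $\St_4$-action on $V$ descends to $\Gamma_0$ because $\tilde r(E)$ acts trivially, together with Lemma~\ref{factsaboutSt}(d) identifying the $\EL_4(R_{G,S})$-action on $V=(\Z^{\oplus S})^{\oplus 4}$ with the matrix action. Everything else is a short unipotent-orbit computation. I do not anticipate a genuine obstacle; the main thing is simply to state the two increment vectors and observe that $V$ is torsion-free so that an orbit is finite if and only if it is a single point.
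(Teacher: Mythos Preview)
Your proposal is correct and follows essentially the same line as the paper: part~(a) via Lemma~\ref{factsaboutSt}(c), part~(b) from the third Steinberg relation in~\eqref{Stn}, and part~(c) by computing the unipotent action $g^n=e_{12}(n)$, $h^n=e_{34}(nE_{s,s})$ on $V$ and noting that the orbit of $(v_1,v_2,v_3,v_4)$ is infinite exactly when $v_2\neq 0$ (respectively $E_{s,s}v_4\neq 0$) and a fixed point otherwise. Your write-up is somewhat more explicit about the increment vectors and the torsion-freeness of $V$, but there is no substantive difference in method.
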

\begin{proof} According to Lemma~\ref{G0}, the projection $\St_4(R_{G,S}) \to \EL_4(R_{G,S})$ factors through $\Gamma_0$. According to Lemma \ref{factsaboutSt} (b), a pre-image of the element $g$ in $\St_4(R_{G,S})$ normally generates it, which shows part (a). Part (b) follows from the Steinberg relations (\ref{Stn}).
For part (c), a direct computation shows that $g^n= e_{12}(n)$ and that $h^n=e_{34}(nE_{s,s})$.
Thus the orbit of a point $p \in V$ under the action of the group generated by $g$ (respectively $h$)
is infinite if the second (respectively the fourth) coordinate is not zero
(respectively not annihilated by $E_{s,s}$).
Otherwise, $p$ is fixed by
this subgroup.
\end{proof}


\section{Actions of $\SL_\infty(\Z)$}
\label{SLaction}
The aim of this section is to show that $\SL_\infty(\Z)$ cannot act by homeomorphisms on a contractible manifold
of finite dimension, or cellularly on a uniformly locally finite CAT(0) cell complex,
by homeomorphisms on a finite dimensional $\Z_p$-acyclic manifold
over $\Z_p$ for $p=2,3$ or on a compact manifold. We need a few properties of $\SL_\infty(\Z)$ to give the proof.
Let $\GL_\infty(\Z)$ denote the group of invertible infinite matrices with entries in $\Z$
which differ from the identity matrix only at finitely many places.
Let $\SL_\infty(\Z)$ be the subgroup of index $2$ in $\GL_\infty(\Z)$
which consists of all matrices of determinant $1$.
Another way to define $\SL_\infty(\Z)$ is as the limit of the increasing sequence
$$
\SL_2(\Z) \subset \SL_3(\Z) \subset \dots \subset \SL_n(\Z) \subset \SL_{n+1}(\Z) \subset \dots
$$
Notice that the group $\SL_\infty(\Z)$ is not finitely generated but it is
recursively presented.%
\footnote{
Most of the results of this section apply also to the group $\St_\infty(\Z)$
which is a $2$ sheeted central extension of $\SL_\infty(\Z)$.
Using this group in the main construction is equivalent to removing
the relation $\tilde r$ in the definition of the group $\Gamma_0$.
}
We start with several easy lemmas:
\begin{lemma}
\label{normgenSLinfty}
Every elementary matrix with a 1 off the diagonal normally generates $\SL_\infty(\Z)$.
\end{lemma}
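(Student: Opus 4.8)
The plan is to reduce the claim to the well-known fact that $\SL_3(\Z)$ is generated by its elementary matrices, together with the observation that all elementary matrices in $\SL_\infty(\Z)$ are conjugate inside $\SL_\infty(\Z)$. First I would fix an elementary matrix, say $e = I + \lambda E_{12}$ with $\lambda = \pm 1$, and let $N$ be the normal closure of $e$ in $\SL_\infty(\Z)$. The goal is to show $N = \SL_\infty(\Z)$, and since $\SL_\infty(\Z) = \bigcup_n \SL_n(\Z)$, it suffices to show $\SL_n(\Z) \subseteq N$ for every $n \geq 3$.

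The key computation is that a single elementary transvection generates all of them up to conjugacy and products. Indeed, for any pair of distinct indices $i \neq j$ one can find a permutation matrix $\sigma \in \SL_\infty(\Z)$ (a product of two transpositions, which lies in the alternating part, hence in $\SL_\infty(\Z)$ — crucially this is where we use that we are in $\SL_\infty$ rather than $\SL_3$, since in the infinite setting there is always room for a compensating transposition) with $\sigma (I + \lambda E_{12}) \sigma^{-1} = I + \lambda E_{ij}$. So $N$ contains every elementary matrix $I + \lambda E_{ij}$ with $\lambda = \pm 1$, and being a group it contains every $I + m E_{ij}$ for $m \in \Z$. Since for $n \geq 3$ the group $\SL_n(\Z)$ is generated by these elementary matrices $I + m E_{ij}$ (a classical fact, using that $\Z$ is Euclidean), we get $\SL_n(\Z) \subseteq N$ for all $n \geq 3$, hence $N = \SL_\infty(\Z)$.

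Finally I would remark that the argument is symmetric in the choice of which elementary matrix we start from: any $I + \lambda E_{ij}$ with $\lambda = \pm 1$ is conjugate in $\SL_\infty(\Z)$ to $I + E_{12}$ by the permutation-matrix trick above, so the normal closure of \emph{any} such matrix is all of $\SL_\infty(\Z)$. The only mild subtlety — and the one point worth stating carefully — is the determinant bookkeeping: when we conjugate by a transposition matrix (determinant $-1$) we must pair it with a second transposition on indices not involved in the transvection so that the conjugating element has determinant $1$; the infinitude of the index set guarantees such spare indices always exist. Everything else is routine.
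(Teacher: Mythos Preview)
Your proof is correct and follows essentially the same approach as the paper: conjugate the given elementary matrix by permutation matrices to obtain all elementary matrices, then invoke the classical fact that elementary matrices generate each $\SL_n(\Z)$. Your version is actually more careful than the paper's, which compresses the argument to two sentences and does not spell out the determinant bookkeeping for the conjugating permutation; your observation that the infinite index set always provides a spare transposition to force determinant $1$ is exactly the point one should make.
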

\begin{proof}
Using commutators with permutation matrices one can get any elementary matrix,
and those are known to generate $\SL_n(\Z)$.
We can do that for any $n$, hence for $\SL_{\infty}(\Z)$ as well.
\end{proof}
\begin{lemma}
\label{finiteaction}
For $k\in\N$, there is $n=n(k)$ big enough so that any action of $\SL_n(\Z)$
on a finite set with less than $k$ points is trivial.
\end{lemma}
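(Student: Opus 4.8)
The plan is to exploit the fact that $\SL_n(\Z)$ for large $n$ has no small-index subgroups other than those it is forced to have. An action of $\SL_n(\Z)$ on a set with fewer than $k$ points is the same as a homomorphism $\SL_n(\Z) \to \Sym(m)$ for some $m < k$; since $|\Sym(m)|$ is bounded in terms of $k$, the kernel of such a homomorphism is a normal subgroup of index at most $(k-1)!$. So it suffices to show: for $n$ large enough (depending on $k$), every normal subgroup of $\SL_n(\Z)$ of index at most $(k-1)!$ is all of $\SL_n(\Z)$, equivalently every proper normal subgroup has index larger than $(k-1)!$.

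First I would recall the classical description of the normal subgroup structure of $\SL_n(\Z)$ for $n \geq 3$: by the solution of the congruence subgroup problem (Bass–Milnor–Serre), every non-central normal subgroup contains a principal congruence subgroup $\Gamma(N) = \ker(\SL_n(\Z) \to \SL_n(\Z/N\Z))$ for some $N \geq 1$, and the center is $\{\pm I\}$ when $n$ is even and trivial when $n$ is odd. Hence a proper normal subgroup is either central (index at least $|\SL_n(\Z)/\{\pm I\}|$, which is infinite, so certainly $> (k-1)!$) or sits between some $\Gamma(N)$ with $N \geq 2$ and $\SL_n(\Z)$; in the latter case its index is at least the index of the largest proper normal subgroup of $\SL_n(\Z/N\Z)$ containing the image. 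The key point is that $\SL_n(\Z/N\Z)$, for $n \geq 3$, has only ``large'' proper quotients: writing $N = \prod p_i^{a_i}$, the group $\SL_n(\Z/N\Z)$ is a product of the $\SL_n(\Z/p_i^{a_i}\Z)$, each of which has $\PSL_n(\F_{p_i})$ (a finite simple group of order growing with $n$) as a composition factor, so any proper normal subgroup has index at least $|\PSL_n(\F_2)|$, which tends to infinity with $n$.

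Concretely, I would choose $n = n(k)$ so that $|\PSL_n(\F_2)| > (k-1)!$ — for instance any $n$ with $2^{n^2 - n - 1} > (k-1)!$ works, since $|\SL_n(\F_2)| = |\GL_n(\F_2)| \geq 2^{n^2-1}$ and $|\PSL_n(\F_2)| = |\SL_n(\F_2)|$ as $\F_2^\times$ is trivial. Then for this $n$, any homomorphism $\SL_n(\Z) \to \Sym(m)$ with $m < k$ has image of order at most $(k-1)!$, hence kernel a normal subgroup of index at most $(k-1)! < |\PSL_n(\F_2)|$; by the previous paragraph the kernel cannot be contained in any proper congruence-type subgroup and is not central, so it must be all of $\SL_n(\Z)$, and the action is trivial.

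The main obstacle is invoking the congruence subgroup property cleanly: one must be careful that $\SL_n(\Z)$ for $n \geq 3$ genuinely has CSP (this is where $n \geq 3$ is essential — it fails for $\SL_2(\Z)$), and that the only ``exceptional'' proper normal subgroup is the center. An alternative, more elementary route that avoids CSP entirely is to use that $\SL_n(\Z)$ is generated by elementary matrices $e_{ij}(1)$, each of which is a commutator $[e_{ik}(1), e_{kj}(1)]$ for a third index $k$ (using $n \geq 3$), so $\SL_n(\Z)$ is perfect; then in any homomorphism to $\Sym(m)$ the image lands in $A_m$, and one can argue directly that for $n$ large the $e_{ij}(1)$, which have infinite order, must map to the identity — but pushing this through rigorously still essentially needs a bound on the order of elements in the image versus the structure of $A_m$, so I would present the congruence-subgroup argument as the primary one.
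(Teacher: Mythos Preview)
Your argument is correct and follows essentially the same route as the paper: translate the action into a homomorphism $\SL_n(\Z)\to\Sym(m)$, bound the index of the kernel by $(k-1)!$, invoke the congruence subgroup property (Bass--Lazard--Serre, the paper's reference \cite{BLS}) to conclude that any proper finite quotient of $\SL_n(\Z)$ surjects onto $\PSL_n(\F_2)$, and finally choose $n$ large enough that $|\PSL_n(\F_2)|>(k-1)!$. The only quibble is the literal inequality $|\GL_n(\F_2)|\geq 2^{n^2-1}$, which fails for small $n$; but any crude lower bound growing to infinity suffices, and the paper itself only says the order is ``approximately $2^{n^2-1}$''.
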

\begin{proof}
If $\SL_n(\Z)$ acts on a set of $k$ points non-trivially,
it means that there is a non-trivial homomorphism $\SL_n(\Z)\to \Sym(k)$,
where $\Sym(k)$ is the permutation group of $k$ elements.
But the kernel of this homomorphism is a normal subgroup of index at most $k!$
(the cardinality of $\Sym(k)$). However, according to \cite{BLS}, normal subgroups in $\SL_n(\Z)$ contain congruence subgroups, hence the smallest finite quotient of
$\SL_n(\Z)$ is the simple group $\PSL_n(\Z/2\Z)$ and has size approximately $2^{n^2-1}$, which completes the proof.
\end{proof}
\begin{lemma}
\label{nofinitequots}
The group $\SL_\infty(\Z)$ has no finite quotients.
\end{lemma}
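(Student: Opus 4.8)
The plan is to show that any finite quotient of $\SL_\infty(\Z)$ must be trivial by exploiting the directed-union structure
$$
\SL_2(\Z) \subset \SL_3(\Z) \subset \dots \subset \SL_n(\Z) \subset \SL_{n+1}(\Z) \subset \dots
$$
together with Lemma~\ref{finiteaction}. Suppose $q\colon \SL_\infty(\Z) \to Q$ is a homomorphism onto a finite group $Q$, and set $k = |Q|$. First I would invoke Lemma~\ref{finiteaction} to produce an integer $n = n(k)$ such that every action of $\SL_n(\Z)$ on a set of at most $k$ points is trivial; in particular every homomorphism from $\SL_n(\Z)$ to a group of order at most $k$ is trivial (a group of order $k$ acts faithfully on itself by left translation, i.e. on $k$ points).

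Next I would restrict $q$ to the subgroup $\SL_n(\Z) \subset \SL_\infty(\Z)$. Since $|Q| = k$, the composite $\SL_n(\Z) \hookrightarrow \SL_\infty(\Z) \xrightarrow{q} Q$ is a homomorphism from $\SL_n(\Z)$ into a group of order $k$, hence trivial by the previous paragraph. In particular the elementary matrix $e_{12}(1)$, viewed as an element of $\SL_n(\Z)$ for this fixed $n$, lies in $\ker q$. But by Lemma~\ref{normgenSLinfty} the single element $e_{12}(1)$ normally generates $\SL_\infty(\Z)$, so $\ker q$, being a normal subgroup containing $e_{12}(1)$, must be all of $\SL_\infty(\Z)$. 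Therefore $Q$ is trivial, which is exactly the claim.

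The one point that needs a little care — and which I regard as the main (mild) obstacle — is making the reduction "homomorphism into a group of order $k$ is trivial" rigorous from the statement of Lemma~\ref{finiteaction}, which is phrased in terms of \emph{actions on finite sets} rather than \emph{homomorphisms to finite groups}. The fix is routine: any group $Q$ of order $k$ embeds in $\Sym(k)$ via the regular representation, so a nontrivial homomorphism $\SL_n(\Z)\to Q$ composed with this embedding gives a nontrivial action of $\SL_n(\Z)$ on $k$ points, contradicting the choice of $n=n(k)$. One should also note that the choice of $n$ depends only on $k = |Q|$ and not on the (a priori unknown) map $q$, so there is no circularity; the argument is uniform in $q$ once $k$ is fixed.
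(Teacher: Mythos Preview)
Your argument is correct and follows essentially the same line as the paper's proof: both rest on the fact (from the congruence subgroup property, as in the proof of Lemma~\ref{finiteaction}) that the smallest nontrivial finite quotient of $\SL_n(\Z)$ has order growing without bound in $n$, so any fixed finite image must receive $\SL_n(\Z)$ trivially for large $n$. The only difference is packaging: you invoke Lemmas~\ref{normgenSLinfty} and~\ref{finiteaction} as black boxes and spell out the passage through the regular representation, whereas the paper simply restates the minimal-quotient-size fact and leaves the remaining deduction implicit.
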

\begin{proof}
The smallest finite quotient of $\SL_n(\Z)$ is $\PSL_n(\Z/2\Z)$ and has size approximately $2^{n^2-1}$. Since the size goes to infinity as $n$ tends to infinity,
we deduce that $\SL_\infty(\Z)$ has no finite quotient.
\end{proof}
\bigskip
The following fixed point result is not sharp but is enough for our purpose.
\begin{thm}[Farb, Corollary 1.2 of~\cite{Farb}]\label{FarbS}
Any simplicial action of $\SL_n(\Z)$ ($n\geq 3$) on a CAT(0) cell complex of dimension
$n-2$ has a globally fixed point.
\end{thm}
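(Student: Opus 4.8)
\medskip
\noindent\textbf{Proof plan.} We may equip $X$ with an equivariant CAT(0) metric and assume $\SL_n(\Z)$ acts by cellular isometries, so that for every subgroup $H\le\SL_n(\Z)$ the fixed-point set $\mathrm{Fix}(H)=\bigcap_{h\in H}\mathrm{Fix}(h)$ is a closed convex subcomplex of $X$, possibly empty. The plan is to exhibit a finite family of subgroups $H_1,\dots,H_m$ that together generate $\SL_n(\Z)$ and whose fixed-point sets are nonempty and meet in large enough sub-families, and then to invoke a Helly-type theorem for convex subsets of a CAT(0) cell complex of dimension $d$: if every $(d+1)$-element subfamily of a finite collection of convex subsets has a common point, then the whole collection does. (For $d=1$ this is the classical fact that pairwise-intersecting subtrees of a tree share a vertex; the general case is the main result of \cite{Farb}.) Equivalently, this gives a local-to-global fixed point criterion: if $G$ is generated by $H_1,\dots,H_m$ and every sub-join of at most $d+1$ of the $H_i$ fixes a point of a CAT(0) cell complex of dimension $d$, then $G$ fixes a point. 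With $d=n-2$, producing a generating family of $\SL_n(\Z)$ whose $(n-1)$-generated sub-joins each fix a point of $X$ therefore yields a global fixed point.

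The construction of such a family, and the verification that the $(n-1)$-generated sub-joins fix points, I would carry out by induction on $n$. For the base case $n=3$ the complex $X$ is a tree and the statement is Serre's property FA for $\SL_3(\Z)$; this is standard, e.g.\ because $\SL_3(\Z)$ has Kazhdan's property (T), which implies property FA. For general $n$ one looks for a generating family built from root (elementary) subgroups, or from conjugates of $\SL_{n-1}(\Z)$ embedded as block subgroups on $(n-1)$-element sets of coordinates, with the feature that any $n-1$ of them generate a \emph{proper} subgroup that is cut out to lie in a lower-rank subgroup; after restricting the action to the appropriate partial fixed-point subcomplex, which has strictly smaller dimension, the inductive hypothesis (property FA$_{n-3}$ for $\SL_{n-1}(\Z)$, applied recursively) supplies the needed common fixed point. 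Threading this through the Helly criterion closes the induction.

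The main obstacle is the off-by-one in dimensions: $\SL_n(\Z)$ is acting on a complex of dimension $n-2$, whereas the inductive hypothesis only guarantees fixed points for $\SL_{n-1}(\Z)$ in dimension $n-3$. Overcoming this — by choosing the generating family and the order in which one intersects fixed-point sets so that each partial intersection genuinely drops the effective dimension by one, and by having the Helly number for CAT(0) cell complexes be \emph{exactly} the dimension plus one, with no loss — is the technical core of the argument, and is precisely what \cite{Farb} establishes. Granting the sharp Helly theorem, what remains is the combinatorics of root subgroups of $\SL_n$ together with the elementary fact that fixed-point sets of groups of isometries of a CAT(0) space are closed and convex.
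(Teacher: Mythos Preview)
The paper does not prove this theorem; it is quoted as a result of Farb (Corollary~1.2 of~\cite{Farb}) and used as a black box. Your sketch is a faithful outline of Farb's actual argument---the CAT(0) Helly theorem together with a generating family of $\SL_n(\Z)$ whose $(n-1)$-fold sub-joins fix points, with property FA for $\SL_3(\Z)$ as the base case---so there is nothing to compare against in the present paper, and as a description of the cited proof your proposal is accurate.
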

The lemmas above can be combined with this fixed point property to show:
\begin{cor}
\label{noaction}
Any simplicial action of $\SL_\infty(\Z)$ on a uniformly locally finite CAT(0) cell complex is trivial.
\end{cor}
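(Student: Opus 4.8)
The plan is to combine Farb's fixed point theorem (Theorem~\ref{FarbS}) with the fact that $\SL_\infty(\Z)$ is an increasing union of the finite-dimensional groups $\SL_n(\Z)$ and that it is normally generated by any single off-diagonal elementary matrix (Lemma~\ref{normgenSLinfty}). The key quantitative input is that the dimension of the cell complex is \emph{fixed}, say equal to $d$, while the $\SL_n(\Z)$ have growing ``fixed point dimension'' $n-2$; once $n \geq d+2$, every simplicial action of $\SL_n(\Z)$ on our complex has a global fixed point.

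First I would fix a uniformly locally finite CAT(0) cell complex $X$ of dimension $d$ together with a simplicial action of $\SL_\infty(\Z)$. Pick any $n$ with $n-2 \geq d$, i.e.\ $n \geq d+2$, and restrict the action to the subgroup $\SL_n(\Z) \subset \SL_\infty(\Z)$. By Theorem~\ref{FarbS}, this restricted action has a global fixed point; let $Y \subseteq X$ denote the fixed point set of $\SL_n(\Z)$, which is then a nonempty subcomplex (and convex, being the fixed set of a group of isometries of a CAT(0) space, though we will not need convexity). In particular the elementary matrix $e := e_{n+1,n}(1)$, say, fixes some vertex $v \in Y$; more to the point, the whole of $\SL_n(\Z)$ fixes $v$.

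Next I would leverage normal generation. By Lemma~\ref{normgenSLinfty}, the single elementary matrix $e_{12}(1)$ normally generates $\SL_\infty(\Z)$, and $e_{12}(1)$ lies in $\SL_n(\Z)$, hence fixes $v$. The issue is that fixing a point is not obviously preserved under conjugation unless the conjugating element also fixes $v$ — so a little care is needed here, and this is the main obstacle. The clean way around it: by the same argument applied to $\SL_m(\Z)$ for every $m \geq d+2$, each $\SL_m(\Z)$ has a nonempty fixed subcomplex; and the fixed subcomplexes are nested, $\mathrm{Fix}(\SL_{m+1}(\Z)) \subseteq \mathrm{Fix}(\SL_m(\Z))$. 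By uniform local finiteness, a descending chain of nonempty subcomplexes that are all fixed sets of isometry groups of a CAT(0) space has nonempty intersection — for instance, restrict to the fixed set $\mathrm{Fix}(\SL_{d+2}(\Z))$, on which all the larger $\SL_m(\Z)$ act, again with nonempty global fixed sets by Theorem~\ref{FarbS} (the fixed set is itself a CAT(0) cell complex of dimension $\leq d$), so the intersection $\bigcap_{m \geq d+2}\mathrm{Fix}(\SL_m(\Z))$ is a decreasing intersection of nonempty convex compact-type subsets and is nonempty. Any point $w$ in this intersection is fixed by every $\SL_m(\Z)$, hence by $\SL_\infty(\Z) = \bigcup_m \SL_m(\Z)$.

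Finally, a group action with a global fixed point is in particular trivial on the link structure only up to the stabilizer, so to conclude that the action is \emph{trivial} (not merely that it has a fixed point) I would invoke Lemma~\ref{nofinitequots} together with Lemma~\ref{finiteaction}: having a global fixed point $w$, the group acts on the link of $w$ and on each sphere around $w$; since $\SL_\infty(\Z)$ has no finite quotients and any simplicial action on a uniformly locally finite complex with a fixed vertex gives, on each finite ball, an action on a finite set, each such action is trivial, so the action on all of $X$ is trivial. The main obstacle, as flagged, is the passage from ``each $\SL_n(\Z)$ has a fixed point'' to ``$\SL_\infty(\Z)$ has a fixed point'': this is handled by the nesting of fixed sets plus a compactness/finiteness argument using uniform local finiteness of $X$.
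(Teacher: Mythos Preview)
Your argument has a genuine gap at the ``nested intersection'' step. You claim that the larger groups $\SL_m(\Z)$ act on $\mathrm{Fix}(\SL_{d+2}(\Z))$, but this is false: $\SL_m(\Z)$ does not normalize $\SL_{d+2}(\Z)$, so for $g\in\SL_m(\Z)$ and $x$ fixed by $\SL_{d+2}(\Z)$ there is no reason for $gx$ to be fixed by $\SL_{d+2}(\Z)$. Without that, you are left with a nested sequence of nonempty closed convex subsets of a CAT(0) space, and such intersections can perfectly well be empty (think of horoballs). Your appeal to ``compactness/finiteness'' is unsubstantiated: uniform local finiteness does make $X$ proper, but properness alone does not force the intersection to be nonempty. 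In fact Remark~\ref{tree} exhibits a fixed-point-free action of $\SL_\infty(\Z)$ on a locally finite tree, in which every $\SL_n(\Z)$ has many fixed points; so the passage from ``each $\SL_n(\Z)$ has a fixed point'' to ``$\SL_\infty(\Z)$ has a fixed point'' genuinely fails without using the \emph{uniform} bound in a sharper way than you do.

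The paper's proof avoids this entirely by never seeking a common fixed point for $\SL_\infty(\Z)$. Instead it chooses $n$ large with respect to \emph{both} $\dim X$ and the uniform degree bound $N$, and shows that this single $\SL_n(\Z)$ already acts trivially: Farb gives a fixed vertex $x_0$, the link of $x_0$ is a finite set of size $<N$, Lemma~\ref{finiteaction} makes the link action trivial, hence every neighbour of $x_0$ is also fixed, and by connectedness the fixed set is all of $X$. Triviality for $\SL_\infty(\Z)$ then follows from normal generation (Lemma~\ref{normgenSLinfty}). Your final paragraph is essentially this spreading argument applied to $\SL_\infty(\Z)$ after securing a global fixed point; the fix is to run it for $\SL_n(\Z)$ instead, which removes the need for the problematic intersection step.
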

\begin{proof}
Let $X$ be a uniformly locally finite CAT(0) cell complex, where the degree of each vertex is less than $N$.
Pick an integer $n$ big enough, so that $n>\dim(X)$ and $2^{n^2-1} > N!$.
Let $G$ denote a copy of $\SL_n(\Z)$
sitting inside $\SL_\infty(\Z)$. By Theorem~\ref{FarbS} the action of $G$ on $X$ has a fixed point.
Let $F\subseteq X$ denote the set of fixed points in $X$ under the $G$-action.
For any fixed point $x_0 \in F$ the group $G$
acts on the link of $x_0$ which is a finite set with less than $N$ elements.
By Lemma~\ref{finiteaction}
this action is trivial, so that any neighbour of $x_0$ is also in $F$.
Therefore $F$ contains all vertices in $X$, thus
$G$ acts trivially on $X$. However by Lemma~\ref{normgenSLinfty} the group $G$ normally generates
$\SL_\infty(\Z)$ therefore the whole group $\SL_\infty(\Z)$ acts trivially on $X$.
\end{proof}
\begin{rem}\label{tree}
M. Bridson noticed that it is possible to construct
an nontrivial action (even without fixed points) of $\SL_\infty(\Z)$
on an infinite locally finite tree. This does not contradict his result in~\cite{Br},
because every copy of $\SL_n(\Z)$ has many fixed points.
The construction (presented below) exploits the fact that
$\SL_\infty(\Z)$ is not finitely generated:

First construct an increasing sequence of subgroups $G_i$ in $\SL_\infty(\Z)$ such that:
the indexes $|G_i: G_{i+1}|$ are finite and $\SL_\infty(\Z) = \cup G_i$.
The group $G_n$ can be chosen as the group of all matrices which, when viewed modulo 2,
are ``upper triangular'' except in the top-left corner, i.e.,
$a_{ij} = 0 \mod 2$ when $i < j$ unless both $i$ and $j$ satisfy $i < n$ and $j <n$.
Observe that the index of $G_n$ in $G_{n+1}$ is the same as the index of
one of the maximal parabolic subgroups inside $\SL_{n}(\F_2)$ and is equal to $2^n-1$
(but goes to infinity as $n \to \infty$).
It is clear that $\SL_\infty(\Z)$ is the union of all groups $G_i$.

Using such a sequence one can build the forest $T$ of cosets: the vertices are the cosets $gG_i$
for some index $i$ and two vertices $gG_i$ and $hG_j$ are connected if $|i-j|=1$ and one coset
contains the other one. This graph is connected, i.e., it is a tree,
because $G= \cup G_i$ and is locally finite since
all indexes  $|G_i: G_{i+1}|$ are finite.

The action of $G$ on the set of cosets $G/G_i$ extends to a fixed point free action of $G$ on $T$.
This example shows that
the condition of uniform local finiteness in Corollary~\ref{noaction} can not be removed.
\end{rem}

Corollary~\ref{noaction} has several different forms depending on the class of spaces.
The proofs of these variants go along one of the following lines:
a) the group $\SL_\infty(\Z)$ contains every finite group as a subgroup, but ``large'' finite
groups cannot act on spaces of ``small'' complexity; b) any action of $\SL_n(\Z)$
on a space of ``complexity'' less than $n$ has a fixed point and one can use this fixed point
to show that $\SL_\infty(\Z)$ can not act on a space of ``complexity'' less than $n$.

Here are several analogs of Theorem~\ref{FarbS} and Corollary~\ref{noaction}:
\begin{thm}[Bridson-Vogtmann, Corollary 1.3 of~\cite{BV}]
\label{BVcontractible}
Any action of $\SL_n(\Z)$ by homeomorphisms on a contractible manifold of
dimension less than $n$ is trivial.
\end{thm}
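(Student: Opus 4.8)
The plan is to recall how Bridson and Vogtmann establish this in~\cite{BV}, since we are invoking it here as a black box; the driving idea is the one signalled before Theorem~\ref{FarbS}, namely to exploit the torsion of $\SL_n(\Z)$ through Smith theory rather than any large-scale geometry. First I would record the available torsion: for a prime $p$ the group $\SL_n(\Z)$ contains elementary abelian $p$-subgroups $A\cong(\Z/p)^r$ whose rank grows linearly in $n$ --- for $p=2$ the diagonal sign matrices of determinant $1$ give $r=n-1$, while for odd $p$ a block-diagonal matrix built from the companion matrix of the $p$-th cyclotomic polynomial has order $p$ and yields $r=\lfloor n/(p-1)\rfloor$. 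I would also note how such an $A$ is entangled with the generators: $e_{ij}(1)$ is conjugate in $\SL_n(\Z)$ to its inverse $e_{ij}(-1)$ by a sign matrix lying in a copy of $(\Z/2)^{n-1}$, and $e_{ij}(1)=[e_{ik}(1),e_{kj}(1)]$ sits inside a copy of $\SL_3(\Z)$.

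The topological core comes next. Assuming $\SL_n(\Z)$ acts by homeomorphisms on a contractible manifold $M$ with $d:=\dim M<n$, fix such an $A\cong(\Z/p)^r\le\SL_n(\Z)$. Smith theory applies verbatim because $M$ is $\Z_p$-acyclic: for every subgroup $B\le A$ the fixed set $M^{B}$ is $\Z_p$-acyclic, hence nonempty, and along a chain $A=B_r\supsetneq\cdots\supsetneq B_0=1$ the inclusions $M^{B_r}\subseteq\cdots\subseteq M^{B_0}=M$ are governed by Smith-index, Borel-type codimension inequalities. Since $d<n$ is too small to carry the full rank of $A$, these inequalities force a large subgroup of $\SL_n(\Z)$ --- in practice a conjugate of some $\SL_m(\Z)$ with $m$ close to $n$, or a large parabolic --- to fix $M$ pointwise. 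One then bootstraps: each $e_{ij}(1)$ lies in, or is a product of commutators of elements of, such subgroups, and by Lemma~\ref{factsaboutSt}(c) the conjugates of a single $e_{12}(1)$ normally generate $\SL_n(\Z)$, so the action must be trivial; that it cannot merely factor through a finite quotient follows because $\SL_n(\Z)$ has no small finite quotients, compare the argument of Lemma~\ref{finiteaction}.

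The hard part is entirely the topological bookkeeping. Because $M$ is assumed only to be a topological manifold there is no smooth structure to linearise near a fixed point and no cell structure to induct over, so every quantitative control on the fixed-point sets must be extracted from $\Z_p$-coefficient Smith theory; running this simultaneously over a whole lattice of elementary abelian subgroups, and converting the resulting numerical constraints into the statement that the relevant reflections, and hence the elementary matrices, act trivially, is precisely what~\cite{BV} carries out. I would therefore not reprove the statement here but simply cite their Corollary~1.3.
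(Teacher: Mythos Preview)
Your proposal is correct and aligns with the paper's treatment: the paper does not prove this statement at all but simply quotes it as Corollary~1.3 of~\cite{BV}, exactly as you conclude in your final sentence. Your additional sketch of the Smith-theory mechanism behind Bridson--Vogtmann's argument is accurate and more informative than what the paper provides, but since both you and the paper ultimately defer to~\cite{BV} as a black box, there is no substantive difference in approach.
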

This result can be easily extended to $\SL_\infty(\Z)$:
\begin{cor}
\label{noisoaction}
Any action of $\SL_\infty(\Z)$ by homeomorphisms on a contractible manifold is trivial.
\end{cor}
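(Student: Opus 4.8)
The plan is to upgrade Theorem~\ref{BVcontractible} to $\SL_\infty(\Z)$ exactly as Corollary~\ref{noaction} upgrades Theorem~\ref{FarbS}, the point being that the dimension of a manifold is a single finite number fixed in advance. Let $M$ be a contractible manifold; being a manifold it has a well-defined finite dimension $d=\dim M$, and being contractible it is connected, so $d$ is unambiguous. Suppose $\SL_\infty(\Z)$ acts on $M$ by homeomorphisms and let $\rho\colon \SL_\infty(\Z)\to \mathrm{Homeo}(M)$ be the associated homomorphism; we must show $\rho$ is trivial.

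First I would choose an integer $n$ with $n\geq 3$ and $n>d$, and let $G$ be the standard copy of $\SL_n(\Z)$ inside $\SL_\infty(\Z)$ (using, say, the description of $\SL_\infty(\Z)$ as the union of the $\SL_m(\Z)$). Restricting $\rho$ to $G$ yields an action of $\SL_n(\Z)$ by homeomorphisms on the contractible manifold $M$, whose dimension $d$ is less than $n$. By Theorem~\ref{BVcontractible} this action is trivial, i.e.\ $G\subseteq \ker\rho$.

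Finally I would invoke normal generation: $\ker\rho$ is a normal subgroup of $\SL_\infty(\Z)$ containing $G$, and $G$ contains the elementary matrix $e_{12}(1)$, which normally generates $\SL_\infty(\Z)$ by Lemma~\ref{normgenSLinfty}. Hence $\ker\rho=\SL_\infty(\Z)$ and $\rho$ is trivial. There is no genuine obstacle here; the only step requiring attention is the choice of $n$, and it succeeds precisely because $\dim M$ is a global finite invariant, so we may take $n$ as large as we please. Note that, unlike in the proof of Corollary~\ref{noaction}, we do not even need the lemmas on finite actions or on the absence of finite quotients: one application of the Bridson--Vogtmann theorem together with Lemma~\ref{normgenSLinfty} suffices.
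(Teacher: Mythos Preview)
Your argument is correct and is essentially identical to the paper's own proof: pick $n>\dim M$, apply Theorem~\ref{BVcontractible} to the copy of $\SL_n(\Z)$ inside $\SL_\infty(\Z)$, and then use Lemma~\ref{normgenSLinfty} to conclude that the whole group acts trivially. Your additional remarks (connectedness making $d$ unambiguous, the observation that Lemmas~\ref{finiteaction} and~\ref{nofinitequots} are not needed here) are accurate but do not change the strategy.
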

\begin{proof}
Let $X$ be a contractible manifold of finite dimension. Pick an integer $n$ so that $n>\dim(X)$.
Let $G$ denote a copy of $\SL_n(\Z)$ sitting inside $\SL_\infty(\Z)$.
According to Theorem~\ref{BVcontractible}, this action is trivial,
and therefore since by Lemma~\ref{normgenSLinfty},
the group $G$ normally generates $\SL_\infty(\Z)$,
we conclude that the whole group $\SL_\infty(\Z)$
acts trivially on $X$.
\end{proof}
\begin{thm}[Bridson-Vogtmann, Corollary 1.3 and Theorem 1.6 of~\cite{BV}]
\label{BVmodp}
If $n>3$ and $d<n$, then $\SL_n(\Z)$ cannot act non-trivially by homeomorphisms on
a $d$-dimensional $\Z_2$-acyclic homology manifold over $\Z_2$.
If $n>3$ is even and $d<n$, then any action of $\SL_n(\Z)$
by homeomorphisms on a $d$-dimensional $\Z_3$-acyclic homology manifold over $\Z_3$ is trivial.
\end{thm}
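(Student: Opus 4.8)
This statement is quoted verbatim from \cite{BV}, so within the present paper it is used as a black box; still, here is the shape of the argument I would give, which is an instance of the template (b) sketched above — large torsion subgroups, Smith theory, and the absence of small quotients of $\SL_n(\Z)$.

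The plan rests on three ingredients. \emph{Arithmetic input.} By the congruence subgroup property, already invoked in the proof of Lemma~\ref{finiteaction}, every proper finite-index subgroup of $\SL_n(\Z)$ has index of the order of $2^{n^2-1}$, and the same rigidity is inherited by a finite-index subgroup after passing to a congruence subgroup; hence, for $n$ large compared with a fixed bound, any action of $\SL_n(\Z)$ — or of a finite-index subgroup of it — on a set of bounded size is trivial. \emph{Torsion input.} $\SL_n(\Z)$ contains large elementary abelian $p$-subgroups: the diagonal matrices with entries $\pm1$ and determinant $1$ give a copy of $(\Z/2)^{n-1}$, and a block-diagonal array of order-$3$ elements of $\SL_2(\Z)$ gives $(\Z/3)^{n/2}$ when $n$ is even — which is exactly why the $p=3$ half of the statement needs $n$ even. \emph{Smith-theoretic input.} For an action of an elementary abelian $p$-group on a $\Z_p$-acyclic $\Z_p$-homology manifold of dimension $d$, the fixed-point set is again a $\Z_p$-acyclic $\Z_p$-homology manifold — in particular non-empty and connected — of strictly smaller dimension whenever the group acts non-trivially, and the local structure at a fixed point constrains the rank of a subgroup that can act effectively near it (roughly $\le d$ for $p=2$ and $\le d/2$ for $p$ odd).

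Granting these, I would argue essentially by induction on $d$. Given an action of $\SL_n(\Z)$ by homeomorphisms on a $\Z_p$-acyclic $\Z_p$-homology manifold $M$ with $d=\dim M<n$, take a $\Z/p$ inside the large $p$-torus; its fixed set $M'$ is a $\Z_p$-acyclic $\Z_p$-homology manifold of dimension $<d$, on which a suitable finite-index subgroup of $\SL_n(\Z)$ — still containing enough torsion, by the rank bound, for the inductive hypothesis to apply — acts trivially. Feeding this back, and using that $\SL_n(\Z)$ has no small quotients so that the finite combinatorial data attached to $M$ near a fixed point supports no non-trivial action, one upgrades ``trivial on $M'$'' to ``$\SL_n(\Z)$ fixes a point of $M$ and acts trivially on a neighbourhood of it'', and then to ``trivial on $M$'' by connectedness of the fixed set. (For the present paper only the passage to $\SL_\infty(\Z)$ is then needed: picking $n$ even with $n>\dim X$, a copy of $\SL_n(\Z)\subset\SL_\infty(\Z)$ acts trivially, and since it normally generates $\SL_\infty(\Z)$ by Lemma~\ref{normgenSLinfty}, so does the whole group — exactly as in Corollary~\ref{noisoaction}.)

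The hard part is the middle step. A single abelian subgroup, however large, does not by itself see all of $\SL_n(\Z)$, so one must interlock the fixed-point sets of several tori, and the way their Weyl groups act on them, with the rigidity of $\SL_n(\Z)$; and the whole Smith-theoretic induction must be run in the category of $\Z_p$-homology manifolds, where even the basic statements — that fixed sets are homology submanifolds, the local model at a fixed point, the behaviour of local homology under a homeomorphism fixing an open set — require genuine work. That is the technical core of \cite{BV}, which I would not reconstruct here: I would cite it and pass directly to the $\SL_\infty(\Z)$ corollary.
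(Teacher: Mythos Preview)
You correctly identify the situation: this theorem is quoted in the paper as a black box from \cite{BV} and carries no proof here, so there is nothing in the paper to compare your argument against. Your first sentence already says this, and it is the right answer.

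The sketch you go on to give is extra, and as an outline of the Bridson--Vogtmann argument it is reasonable: the large elementary abelian $p$-subgroups $(\Z/2)^{n-1}$ and $(\Z/3)^{\lfloor n/2\rfloor}$, the Smith-theoretic induction on dimension, and the ``no small quotients'' input are indeed the ingredients of \cite{BV}, and you are right that the parity condition for $p=3$ comes from needing enough $3$-torsion. Your honest flagging of the hard middle step --- interlocking several tori and running Smith theory in the homology-manifold category --- is appropriate; that is exactly the technical work done in \cite{BV} and not reproduced here. For the purposes of the present paper, the only content actually used is the bare statement, feeding into Corollary~\ref{nomodpaction} in the way you describe in your parenthetical.
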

Again, one can extend this result to $\SL_\infty(\Z)$:
\begin{cor}
\label{nomodpaction}
For $p=2,3$, any action of $\SL_\infty(\Z)$ by homeomorphisms on a finite dimensional
$\Z_p$-acyclic manifold over $\Z_p$ is trivial.
\end{cor}
\begin{proof}
Let $X$ be a finite dimensional $\Z_p$-acyclic manifold over $\Z_p$.
Pick an even integer $n$ so that $n>\dim(X)$.
Let $G$ denote the copy of $\SL_n(\Z)$ sitting inside $\SL_\infty(\Z)$.
According to Theorem~\ref{BVmodp}, this action is trivial,
and therefore since by Lemma~\ref{normgenSLinfty},
the group $G$ normally generates $\SL_\infty(\Z)$,
we conclude that the whole group $\SL_\infty(\Z)$
acts trivially on $X$.
\end{proof}
In the same vein, Weinberger noticed the following.
\begin{prop}[Weinberger, see~\cite{WZ} Proposition 1]
\label{WZ}
The infinite special linear group $\SL_\infty(\Z)$ does not act topologically,
non-trivially, on any compact manifold, or indeed on any manifold whose homology with
coefficients in a field of positive characteristic is finitely generated.
\end{prop}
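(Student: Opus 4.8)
The plan is to play the subgroup structure of $\SL_\infty(\Z)$ against Smith theory; this is in essence Weinberger's argument, and the proof indicated in~\cite{WZ} runs along these lines. Since a compact manifold has finitely generated homology with coefficients in every field, the assertion about compact manifolds is a special case of the one about manifolds with finitely generated positive–characteristic homology, so fix a prime $p$ and a manifold $M$ with $\dim_{\F_p}H_*(M;\F_p)<\infty$. The classical input is Smith theory in the quantitative form due to Mann and Su: there is an integer $r=r(M,p)$, depending only on the total dimension of $H_*(M;\F_p)$, such that no elementary abelian group $(\Z/p)^k$ with $k>r$ acts effectively by homeomorphisms on $M$.

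Suppose, for contradiction, that $\varphi\colon\SL_\infty(\Z)\to\mathrm{Homeo}(M)$ is a nontrivial homomorphism, and let $Q=\varphi(\SL_\infty(\Z))$. I will show that $Q$ contains a copy of $(\Z/p)^k$ for every $k$; since $Q$ acts effectively on $M$, this contradicts the Mann--Su bound and finishes the proof. The point is to identify the nontrivial quotients of $\SL_\infty(\Z)$. At finite level, a noncentral normal subgroup of $\SL_n(\Z)$ with $n\ge 3$ has finite index (by~\cite{Ma}), hence contains a principal congruence subgroup $\Gamma_n(m)$ (by~\cite{BLS}), which in turn is normally generated in $\SL_n(\Z)$ by the single elementary matrix $e_{12}(m)$ (using that the relative group $SK_1(\Z,m\Z)$ vanishes). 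Passing to the direct limit, and using Lemma~\ref{normgenSLinfty} — which forces a normal subgroup of $\SL_\infty(\Z)$ to be everything as soon as it contains some $e_{ij}(1)$ — one obtains that every nontrivial quotient of $\SL_\infty(\Z)$ is either $\SL_\infty(\Z)$ itself or a congruence quotient $\SL_\infty(\Z/m)$ with $m\ge 2$.

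It remains to observe that all of these groups contain $(\Z/p)^k$ for every $k$ (indeed for every prime). Inside $\SL_\infty(\Z)$ one takes a block diagonal sum of $k$ copies of the companion matrix of the $p$-th cyclotomic polynomial (with a trivial modification when $p=2$). Inside $\SL_\infty(\Z/m)$: if $p$ divides $m$ one uses the $k$ commuting order-$p$ unipotents $e_{2i-1,2i}(m/p)$, $i=1,\dots,k$; if $p$ does not divide $m$ one first realises $(\Z/p)^k$ inside $\SL_N(\F_\ell)$ for a prime $\ell$ dividing $m$ and $N$ large enough that $p$ divides $|\SL_N(\F_\ell)|$, and then lifts it through the reduction $\SL_N(\Z/\ell^a)\to\SL_N(\F_\ell)$ by Schur--Zassenhaus, since the kernel is an $\ell$-group and $(\Z/p)^k$ has order prime to $\ell$.

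The step I expect to be the real obstacle is the clean identification of the quotients of $\SL_\infty(\Z)$: although this is morally just the congruence subgroup property for $\SL_\infty(\Z)$ and reduces to the finite-level theorems of Margulis and Bass--Milnor--Serre, carrying out the limit carefully — in particular handling composite $m$ (where one must rerun the argument for $\SL_\infty(\Z/m)$) and invoking the vanishing of the relative $SK_1$ — takes some work. A secondary point to be checked is that the Mann--Su bound applies to purely topological actions on topological manifolds, which it does because Smith theory governs actions of $p$-groups on finite-dimensional $\F_p$-cohomology manifolds; the rest (the explicit elementary abelian subgroups) is routine.
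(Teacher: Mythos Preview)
The paper does not supply its own proof of this proposition: it is quoted as Weinberger's result, with a bare citation to~\cite{WZ} and no argument. So there is nothing in the paper to compare against; your proposal is a reconstruction of the cited proof rather than a parallel to anything the authors wrote.

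As a reconstruction, your strategy is sound and is indeed the standard line: Mann--Su bounds the rank of elementary abelian $p$-groups acting effectively on $M$, so one must exhibit arbitrarily large $(\Z/p)^k$ in every nontrivial quotient of $\SL_\infty(\Z)$. Two remarks on the details. First, your assertion that \emph{every} nontrivial quotient is literally $\SL_\infty(\Z)$ or $\SL_\infty(\Z/m)$ is stronger than you need and hinges on the vanishing of $SK_1(\Z,m\Z)$, which you flag but do not prove; it is cleaner (and sufficient) to argue only that any proper nontrivial normal subgroup $N$ is contained in $\Gamma_\infty(q)$ for some prime $q$. This follows without $SK_1$ input: for each $k$ the finite perfect group $\SL_k(\Z)/(N\cap\SL_k(\Z))$ surjects onto some $\PSL_k(\F_{q_k})$, the set of such ``good'' primes at level $k$ is finite, nonempty, and decreasing in $k$, hence eventually constant; the surviving prime $q$ gives $N\subseteq\Gamma_\infty(q)$, so the quotient surjects onto $\SL_\infty(\F_q)$, which visibly contains $(\Z/p)^k$ for all $k$ and all $p$. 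Second, your invocation of Mann--Su for topological actions on topological manifolds is correct, since Smith theory applies to finite-dimensional $\F_p$-homology manifolds; this covers the hypothesis ``homology with coefficients in a field of positive characteristic is finitely generated'' once one fixes the relevant prime.

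In short: your outline is correct and faithful to Weinberger's argument; the only soft spot is the exact classification of normal subgroups, and you can sidestep it entirely with the weaker surjection-onto-$\SL_\infty(\F_q)$ statement above.
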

All these results suggest the meta-theorem quoted in the Introduction.
\section{The group $\Gamma$ and proof of the main results}
\label{proofs}
Let $\Gamma_0$ be the group defined in Section~\ref{G_0} and let $g$ and $h$ be the elements of $\Gamma_0$
defined in Lemma~\ref{actionV}. Notice that the group $\Gamma_0$ maps onto $\SL_4(\Z)$, by composition with the map $\St_4(R_{G,S})\to\EL_4(\Z)=\SL_4(\Z)$ induced by the map $R_{G,S}\to\Z$, hence it does act on a CAT(0) space, so is not good enough for our purpose.
Let $\bar\Gamma$ denote the free product of two copies of $\Gamma_0$, and let $\bar\rho_i$ be
the two embeddings of $\Gamma_0$ in $\bar\Gamma$. Finally let us define
$$
\Gamma=\Gamma_0*\Gamma_0/
\langle \bar \rho_1(g) = \bar \rho_2(h),\bar \rho_2(g) = \bar \rho_1(h) \rangle.
$$
That is, $\Gamma$ is the quotient of $\bar\Gamma$ by the relations
$$
\bar \rho_1(g) = \bar \rho_2(h)
\quad
\bar \rho_2(g) = \bar \rho_1(h).
$$
Another way to define $\Gamma$ is as the amalgamated product of two copies of $\Gamma_0$
over the subgroup $\Z\oplus \Z$ generated by $g$ and $h$, however the two subgroups
are identified via a nontrivial twist of $\Z \oplus \Z$. This shows that the group $\Gamma$
is non trivial and that the maps $\rho_1,\rho_2 :\Gamma_0 \to \Gamma$, induced from $\bar \rho_1$ and
 $\bar \rho_2$, are embeddings.
Another way to see that is to construct an explicit action of $\Gamma$ on
some infinite set (see Remark~\ref{action_on set}).%
\footnote{Similar way of gluing two groups was used in~\cite{KaN} in order to construct groups with
a controlled profinite completion.}
\begin{proof}[Proof of Theorem~\ref{main}]
First, the group $\Gamma$ is finitely presented since it is a quotient of $\bar\Gamma$
using two more relations only. By construction as amalgamated product, the group $\Gamma$ is infinite.
According to Lemma \ref{actionV}, the group $\Gamma_0$ is normally generated by $g$,
so that $\Gamma$ is normally generated by $\rho_1(g)$ and $\rho_2(g)$. So, by construction, $\Gamma$ is normally generated by the two copies of $\SL_\infty(\Z)$ isomorphic to $\rho_1(\Delta_0)$ and $\rho_2(\Delta_0)$. But in fact, the copy of $\SL_\infty(\Z)$ isomorphic to $\rho_1(\Delta_0)$ is enough. Indeed, since $\bar\rho_2(g)=\bar\rho_1(h)$ it implies that $\rho_2(g)\in\rho_1(\Delta_0)$.
But $g$ normally generates $\Gamma_0$, hence $\rho_2(g)$ normally generates $\rho_2(\Delta_0)$ as well.
\end{proof}

\begin{proof}[Proof of Corollary~\ref{Aal}]
Suppose that $\Gamma$ acts either simplicially on some uniformly locally finite CAT(0) cell complex  $X$,
or by homeomorphisms on a contractible manifold $X$ of finite dimension,
on a $\Z_p$-acyclic manifold over $\Z_p$ for $p=2,3$ or on a compact manifold.
Since $h\in \SL_\infty(\Z) \simeq \Delta_0$ and $\rho_i: \Delta_0 \hookrightarrow \Gamma$ are injective, by Corollaries~\ref{noaction}, \ref{noisoaction} and~\ref{nomodpaction} or
Proposition~\ref{WZ}, the action of $\rho_1(h)$ and $\rho_2(h)$ on $X$ is trivial.
The defining relations
$\rho_{3-i}(h) = \rho_{i}(g)$ give us that both $\rho_{1}(g)$
and $\rho_{2}(g)$ act trivially on $X$.
However, the group $\Gamma$ is normally generated by these two elements and
therefore the whole group acts trivially on $X$.
\end{proof}
If we had variants of Corollaries~\ref{noaction} or~\ref{noisoaction},
one could generalize Corollary~\ref{Aal} to other classes of
topological spaces like general CAT(0) or systolic spaces that are uniformly locally finite, $\Z_p$-acyclic manifolds for $p>3$, etc,
which would give more evidence for the meta-theorem of the Introduction.
We finish with a direct proof that the group $\Gamma$ is not trivial:
\begin{rem}
\label{action_on set}
The group $\Gamma$ acts non-trivially on the direct sum $V$ of $4$ copies of $\Z^{\oplus S}$:
By Lemma~\ref{actionV} the group $\Gamma_0$ acts on $V$ and
the actions of both $g$ and $h$ on this countable set has infinitely many fixed points and
infinitely many infinite orbits (and no finite orbits other than fixed points).
Therefore there exists an involution
$\sigma$ on $V$ which conjugates the action of $h$ to the one of $g$
($\sigma$ is just some permutation
of the countable set $V$, we do not require that $\sigma$ preserves the linear structure of $V$).
This permutation allows us to construct a non-trivial action of $\Gamma$ on $V$: $\rho_1(\Gamma_0)$
acts as in Lemma~\ref{actionV} and $\rho_2(\Gamma_0)$ acts by the twist $\sigma$.
Our choice of the permutation $\sigma$ ensures that the relations
$\rho_1(g)= \rho_2(h)$ and $\rho_2(g)=\rho_1(h)$ are satisfied.
\end{rem}


\begin{thebibliography}{99}
\bibitem{crowd}G. Arzhantseva, M.R. Bridson, T. Januszkiewicz, I. J. Leary, A. Minasyan and J. Swiatkowski.
\emph{Infinite groups with fixed point properties.}  Preprint November 2007. \texttt{arXiv:0711.4238}

\bibitem{BLS}H. Bass, M. Lazard and J.-P. Serre. \emph{Sous-groupes d'indice fini dans {${\bf SL}(n,\,{\bf Z})$}}, Bull. Amer. Math. Soc. {\bf 70}, (1964), {385--392}.

\bibitem{Br}M. Bridson, preprint in preparation.

\bibitem{BH} M. Bridson and A. Haefliger.
\emph{Metric spaces of non-positive curvature.} Grundlehren der Mathematischen Wissenschaften 319. Springer-Verlag, Berlin, 1999.

\bibitem{BV}M. Bridson and K. Vogtmann.
\emph{Actions of automorphism groups of free groups on homology spheres and acyclic manifolds.} Preprint March 2008. \texttt{arXiv:0803.2062}

\bibitem{CFP}J.W. Cannon, W.J. Floyd, and W.R. Parry. \emph{Introductory notes on Richard Thompson's groups.} Enseign. Math. (2), 42(3-4):215--256, 1996.

\bibitem{EJ-Z}M. Ershov and A. Jaikin-Zapirain.
\emph{Property (T) for noncommutative universal lattices.}
Preprint Sept 2008. \texttt{arXiv:0809.4095}

\bibitem{Farb}B. Farb.
\emph{Group actions and Helly's theorem.}
Preprint June 2008. \texttt{arXiv:0806.1692}

\bibitem{FS} D. Fisher and L. Silberman.
\emph{Groups not acting on manifolds.}
Int. Math. Res. Not. 2008, Art. ID rnn060, 11 pp.

\bibitem{KaN} M. Kassabov and N. Nikolov.
\emph{Cartesian products as profinite completions.}
Int. Math. Res. Not. 2006, Art. ID 72947, 17 pp.

\bibitem{KMcC} S. Kristic and J. McCool.
\emph{Presenting $\GL_ n(k\langle T\rangle)$.}
J. Pure Appl. Algebra {\bf 141} (1999), no. 2, 175--183.

\bibitem{Ma}G. Margulis.
\emph{Arithmeticity of nonuniform lattices.} (Russian)
Funkcional. Anal. i Prilo\v zen.  {\bf 7}  (1973), no. 3, 88--89.

\bibitem{milnor}
J. Milnor. \emph{Introduction to algebraic $K$-theory.}
Annals of Mathematics Studies, No. 72. Princeton University Press, Princeton, N.J., 1971.


\bibitem{W}S. Weinberger. \emph{${\rm SL}(n,\Z)$ cannot act on small tori.}  Geometric topology (Athens, GA, 1993),  406--408,
AMS/IP Stud. Adv. Math., 2.1, Amer. Math. Soc., Providence, RI, 1997.

\bibitem{WZ}S. Weinberger.
\emph{Some remarks inspired by the $C^0$ Zimmer program.}

\bibitem{WM}D. Witte-Morris.
\emph{Arithmetic groups of higher $\Q$-rank cannot act on 1-manifolds.}
Proc. Amer. Math. Soc. {\bf 122} (1994) 333--340.

\bibitem{Zi}R.J. Zimmer.
\emph{Lattices in semisimple groups and invariant geometric structures on compact manifolds.}
Discrete groups in geometry and analysis (New Haven, Conn., 1984), 152--210,
Progr. Math., 67, Birkh\"auser Boston, Boston, MA, 1987.
\end{thebibliography}
\end{document}